\newif{\ifarxiv}
\newtheorem{theorem}{Theorem}[section]
\newtheorem{lemma}[theorem]{Lemma}
\newtheorem{proposition}[theorem]{Proposition}
\newtheorem{fact}[theorem]{Fact}
\theoremstyle{definition}
\newtheorem{definition}[theorem]{Definition}
\newtheorem{remark}[theorem]{Remark}
\newtheorem{example}[theorem]{Example}
\numberwithin{equation}{section}
\newcommand\cb[1]{\mathbf{#1}} % or \pmb
\newcommand{\Topcat}{\cb{Top}}
\newcommand{\Frmcat}{\cb{Frm}}
\newcommand{\Loccat}{\cb{Loc}}
\newcommand\real{\mathbb{R}}
\newcommand\rat{\mathbb{Q}}
\newcommand\nat{\mathbb{N}}
\newcommand\dc{\mathop{\downarrow}}
\newcommand\upc{\mathop{\uparrow}}
\newcommand\limp{\mathrel{\Rightarrow}}
\newcommand\uuarrow{\rlap{$\uparrow$}\raise.5ex\hbox{$\uparrow$}}%%
\newcommand\ddarrow{\rlap{$\downarrow$}\raise.5ex\hbox{$\downarrow$}}%%
\newcommand\directed{\sideset{}{^{\,\makebox[0pt]{$\scriptstyle\uparrow$}\!}}}
\newcommand\filtered{\sideset{}{^{\,\makebox[0pt]{$\scriptstyle\downarrow$}\!}}}
\newcommand\dsup{\directed\sup}
\newcommand\fcap{\filtered\bigcap}
\newcommand\eqdef{\mathrel{\buildrel \text{def}\over=}}
\newcommand\diff{\smallsetminus}
\newcommand\Vt{{\text{\textsf V}}}
\newcommand\Plotkin{\mathcal P} %{\mathcal P\ell}
\newcommand\Plotkinn{\Plotkin_\Vt}
\newcommand\quasi{{\mathrm{quasi}}}
\newcommand\QLV{\Plotkin^\quasi_\Vt}
\newcommand\A{{\mathbf A}}
\newcommand\Aval{\Plotkin^\A}
\newcommand\TEMleq{\sqsubseteq^{\text{TEM}}}
\newcommand\EMleq{\sqsubseteq^{\text{EM}}}
\newcommand\pt{\text{\textsf{Spec}\;}}
\newcommand\Open{\mathcal O}
\begin{document}

\ifarxiv
\relax
\else
%%%%%%%%%%%%%%%%%%%%%%%%%%%%%%%%%%%%%%%%%%%%%%%%%%%%%%%%%%%%
%%%%%%%%%%%%%%%%%%%%%%%%%%%%%%%%%%%%%%%%%%%%%%%%%%%%%%%%%%%%
% This a placeholder for the TOPLOGY PROCEEDINGS logo %%%%%%
\noindent                                             %%%%%%
\begin{picture}(150,36)                               %%%%%%
\put(5,20){\tiny{Submitted to}}                       %%%%%%
\put(5,7){\textbf{Topology Proceedings}}              %%%%%%
\put(0,0){\framebox(140,34){}}                        %%%%%%
\put(2,2){\framebox(136,30){}}                        %%%%%%
\end{picture}                                        %%%%%%
%%%%%%%%%%%%%%%%%%%%%%%%%%%%%%%%%%%%%%%%%%%%%%%%%%%%%%%%%%%%
%%%%%%%%%%%%%%%%%%%%%%%%%%%%%%%%%%%%%%%%%%%%%%%%%%%%%%%%%%%%
\vspace{0.5in}

\renewcommand{\bf}{\bfseries}
\renewcommand{\sc}{\scshape}
%insert defs/styles
\vspace{0.5in}
\fi

\title[Weakly Hausdorff Spaces]%
{On Weakly Hausdorff Spaces and Locally Strongly Sober Spaces}

%    Information for first author:
\author{Jean Goubault-Larrecq}
\address{Universit\'e Paris-Saclay, CNRS, ENS Paris-Saclay, Laboratoire M\'ethodes Formelles, 91190, Gif-sur-Yvette, France.}
%    Current address (if needed):
%\curraddr{}
\email{goubault@lsv.fr}

%    Information for second author (if needed):
%\author{Author Two}
%\address{}
%\email{}
%\thanks{Support information for the second author.}

%    General info
%%%%%%%%%%%%%%%%%%%%%%%%%%%%%%%%%%%%%%%%%%%%%%%%%%%
\subjclass[2010]{Primary % really 2020, but if I write 2010, it shows
                         % "1991" :-(
  54D10,
  % Lower separation axioms (T0–T3, etc.)
  % 54G99; % General topology - Peculiar spaces - None of the above, but in
      % this section
 54A20,
% Convergence in general topology (sequences, filters, limits,
% convergence spaces, nets, etc.)
 54B20.
% Hyperspaces in general topology
  Secondary
 68Q55,
% Semantics in the theory of computing
  54H30,
  % Applications of general topology to computer science (e.g., digital topology, image processing) [See also 68U03]
 % 68R99, % Computer science - Discrete mathematics in relation to
%       % computer science - None of the above, but in this section
% 06A07, % Order, lattices, ordered algebraic structures - Ordered sets -
% Combinatorics of partially ordered sets
% 06B23
%  Complete lattices, completions
 %06B30 % Order, lattices, ordered algebraic structures - Lattices -
       % Topological lattices, order topologies
%  54D99 % General topology - Fairly general properties - None of the
%       % above, but in this section
% 54E99 % General topology - Spaces with richer structure - None of the
%       % above, but in this section
06F30% Order, lattices, ordered algebraic structures - Lattices -
%       % Ordered structures - Topological lattices, order topologies
}
%                                                                                                                           %
%         Please use the current 2010 Mathematics Subject Classification:             %
%         http://www.ams.org/mathscinet/msc/                                                        %
%         http://www.zentralblatt-math.org/msc/en/                                                 %
%%%%%%%%%%%%%%%%%%%%%%%%%%%%%%%%%%%%%%%%%%%%%%%%%%%

\keywords{Weakly Hausdorff, locally strongly sober, temperate frame,
  Stone duality, lens}
% \thanks{This research was supported by the BRAVAS project, number
% ANR-17-xxxx, of the French National Research Agency (ANR)}
% \thanks{The author was supported by grant ANR-17-CE40-0028 of the
%   French National Research Agency ANR (project BRAVAS)} % JGL, 21 dec 2017

\begin{abstract}
  % A weakly Hausdorff space, in the sense of Keimel and Lawson, is a
  % space in which every open neighborhood of an intersection
  % $\upc x \cap \upc y$ contains an intersection of an open
  % neighborhood of $x$ and of an open neighborhood of $y$.  
  We show that the locally strongly sober spaces are exactly the
  coherent sober spaces that are weakly Hausdorff in the sense of
  Keimel and Lawson.  This allows us to describe their Stone duals
  explicitly.  As another application, we show that weak Hausdorffness
  is a sufficient condition for lenses and of quasi-lenses to form
  homeomorphic spaces, generalizing previously known results.
\end{abstract}

\maketitle

\section{\bf Introduction}
\label{sec:intro}

In their study of measure extension theorems for $T_0$ spaces, Keimel
and Lawson introduced so-called \emph{weakly Hausdorff} spaces
\cite[Lemma~6.6]{KL:measureext}.  The notion does not seem to have
been investigated much.  We will show that these spaces are related to
the more well-known \emph{locally strongly sober} spaces: in
Section~\ref{sec:locally-strongly-sob}, we show that the latter are
exactly the weakly Hausdorff, coherent sober spaces.  This allows us
to elucidate their Stone duals in
Section~\ref{sec:stone-duals-locally}, as those frames in which the
filter-theoretic join of any two Scott-open filters is Scott-open.
The notion of weakly Hausdorff spaces also helps us generalize a few
results on variants of the so-called Plotkin powerdomain, namely on
spaces of lenses and of quasi-lenses, with shorter proofs, as we will
see in Section~\ref{sec:lens-quasi-lenses}.  An intermediate
Section~\ref{sec:remarks-examples} will be an opportunity of making
remarks and giving examples and counterexamples.  We start off
immediately with Section~\ref{sec:preliminaries}, giving the required
preliminaries.

% A topological space $X$ is \emph{weakly Hausdorff} in the sense of
% Keimel and Lawson \cite[Lemma~6.6]{KL:measureext} if and only if for
% all points $x, y \in X$, for every open neighborhood $W$ of
% $\upc x \cap \upc y$, there is an open neighborhood $U$ of $x$ and an
% open neighborhood $V$ of $y$ such that $U \cap V \subseteq W$.  (The
% notation $\upc x$ stands for the upward closure of $x$ in the
% specialization preordering $\leq$ of $X$; the latter is defined by
% $x \leq y$ if and only if every open neighborhood of $x$ contains
% $y$.)  In a $T_1$ space, $\leq$ is equality, and it is easy to see
% that the $T_1$ weakly Hausdorff spaces are exactly the Hausdorff
% spaces.  Outside of $T_1$ spaces, all stably locally compact spaces
% are weakly Hausdorff \cite[Lemma~8.1]{KL:measureext}; also, every
% partially ordered set with its Alexandroff topology (whose open sets
% are exactly the upwards-closed sets) is trivially weakly Hausdorff.

\section{\bf Preliminaries}
\label{sec:preliminaries}

In general, we refer to \cite{GHKLMS:contlatt,JGL-topology} for
material on domain theory and topology.

A subset $A$ of a preordered set $(P, \leq)$ is upwards-closed if and
only if every element larger than or equal to an element of $A$ is in
$A$.  The \emph{upward closure} $\upc A$ of $A$ is the collection of
points that are larger than or equal to some point of $A$.  We write
$\upc x$ instead of $\upc \{x\}$.  We define downwards-closed sets,
downward closures $\dc A$, and $\dc x$ similarly.

Every topological space $X$ has a \emph{specialization preordering}
$\leq$, defined by $x \leq y$ if and only if every open neighborhood
of $x$ contains $y$.  % This allows us to consider any topological space
% as a preordered set.
A space is $T_0$ if and only if the
specialization preordering $\leq$ is
antisymmetric.  % Every sober space is $T_0$.

Conversely, every preordered set $(P, \leq)$ can be given the
\emph{Alexandroff topology}, whose open subsets are the upwards-closed
subsets of $P$.
% This is the finest topology having $\leq$ as
% specialization preordering.  The coarsest one is the \emph{upper
%   topology}, whose open subsets are unions of sets of the form $P
% \diff \dc E$, $E$ finite.

The following is due to Keimel and Lawson
\cite[Lemma~6.6]{KL:measureext}.
\begin{definition}
  \label{defn:wH}
  A topological space $X$ is \emph{weakly Hausdorff} if and only if
  for all $x, y \in X$, every open neighborhood $W$ of
  $\upc x \cap \upc y$ contains an intersection $U \cap V$ of an open
  neighborhood $U$ of $x$ and of an open neighborhood $V$ of $y$.
\end{definition}
This should not be confused with the many other notions with similar
names \cite{Hoffmann:wH}, in particular with McCord's \emph{weak
  Hausdorff} spaces \cite{McCord:class:space}.

The following enumerates a few kinds of weakly Hausdorff spaces.
Items~(1) and~(3) are obvious, and item~(2) is Lemma~8.1 in
\cite{KL:measureext}.
\begin{proposition}
  \label{prop:wH}
  The following are weakly Hausdorff spaces:
  \begin{enumerate}
  \item all Hausdorff spaces; in fact, the Hausdorff spaces are exactly
    the $T_1$ weakly Hausdorff spaces;
  \item all stably locally compact spaces (see below) \cite{KL:measureext};
  \item all preordered sets in their Alexandroff topology.
    % \item all preordered sets in their upper topology.
  \end{enumerate}
\end{proposition}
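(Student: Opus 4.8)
The plan is to treat the three items in increasing order of difficulty, reserving essentially all the work for~(2). Item~(3) is immediate: in the Alexandroff topology of a preorder the open sets are exactly the upward-closed sets, so $\upc x$ and $\upc y$ are themselves open, and any open $W \supseteq \upc x \cap \upc y$ works with $U = \upc x$ and $V = \upc y$. For item~(1), I would use that a Hausdorff space is $T_1$, so its specialization order is trivial and $\upc x = \{x\}$ for every $x$: if $x = y$, take $U = V = W$ for any open neighborhood $W$ of $x$; if $x \neq y$, then $\upc x \cap \upc y = \emptyset$, which every open $W$ contains, while Hausdorffness supplies disjoint open neighborhoods $U \ni x$, $V \ni y$ with $U \cap V = \emptyset \subseteq W$. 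The ``in fact'' clause then follows: Hausdorff trivially implies $T_1$, and conversely, if $X$ is $T_1$ and weakly Hausdorff and $x \neq y$, then $\upc x \cap \upc y = \emptyset$, so instantiating Definition~\ref{defn:wH} with $W = \emptyset$ yields disjoint open neighborhoods of $x$ and $y$.

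The main obstacle is item~(2). Recall that a stably locally compact space is sober, locally compact, and coherent, the last meaning that finite intersections of compact saturated sets are compact saturated. Fix $x, y \in X$ and an open $W \supseteq \upc x \cap \upc y$. Since singletons are compact, $\upc x$ and $\upc y$ are compact saturated, and I would rely on two standard facts: (a) in a locally compact space, every open set containing a compact saturated set $K$ also contains a compact saturated $Q$ with $K \subseteq \mathrm{int}(Q)$ (cover $K$ by finitely many compact-saturated neighborhoods of its points, take their union, and saturate); and (b) every sober space is well-filtered, i.e.\ whenever a filtered family of compact saturated sets has intersection contained in an open set $O$, some member of the family is already contained in $O$.

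Using~(a) and the fact that a saturated set is the intersection of its open neighborhoods, one gets that $\upc x$ equals the intersection of the compact saturated sets $Q$ with $\upc x \subseteq \mathrm{int}(Q)$, and similarly $\upc y$ for the analogous family; hence $\upc x \cap \upc y$ is the intersection of the compact saturated sets $Q \cap Q'$ as $Q, Q'$ range over these two families. The crucial point is that this family of intersections is \emph{filtered}: given $Q_1, Q_1'$ and $Q_2, Q_2'$, apply~(a) inside the open sets $\mathrm{int}(Q_1) \cap \mathrm{int}(Q_2) \supseteq \upc x$ and $\mathrm{int}(Q_1') \cap \mathrm{int}(Q_2') \supseteq \upc y$ to obtain $Q_3, Q_3'$ with $Q_3 \cap Q_3' \subseteq (Q_1 \cap Q_1') \cap (Q_2 \cap Q_2')$. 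Since the intersection of this filtered family lies in the open set $W$, well-filteredness yields a single member $Q \cap Q'$ with $Q \cap Q' \subseteq W$; then $U := \mathrm{int}(Q)$ and $V := \mathrm{int}(Q')$ are open neighborhoods of $x$ and $y$ with $U \cap V \subseteq Q \cap Q' \subseteq W$, which is what weak Hausdorffness demands. This recovers \cite[Lemma~8.1]{KL:measureext}. The one delicate step — and the only place coherence enters — is checking that the families in play are filtered; the rest is routine bookkeeping with the definitions.
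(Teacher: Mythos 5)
Your proposal is correct, and it does more than the paper does: the paper offers no proof at all here, declaring items (1) and (3) obvious and delegating item (2) to Lemma~8.1 of Keimel and Lawson, so your contribution is in effect a self-contained reproof of that cited lemma. Your treatment of (3) (take $U=\upc x$, $V=\upc y$, which are open in the Alexandroff topology) and of (1) (including the converse direction, instantiating Definition~\ref{defn:wH} at $W=\emptyset$ when $x\neq y$ to extract disjoint neighborhoods) is exactly what the paper leaves to the reader. For (2), your route --- interpolate compact saturated neighborhoods $Q$ with $\upc x\subseteq\mathrm{int}(Q)$ using local compactness, observe that the sets $Q\cap Q'$ form a filtered family of compact saturated sets whose intersection is $\upc x\cap\upc y$, and then apply well-filteredness (a consequence of sobriety, via Hofmann--Mislove) to find one member inside $W$ --- is a sound and standard argument, and it buys the reader an explicit proof where the paper only gives a citation. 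One correction of emphasis: coherence is not what makes the family filtered (filteredness comes from local compactness, via your interpolation fact (a)); rather, coherence is what guarantees that each member $Q\cap Q'$ is compact saturated, which is precisely the hypothesis needed before well-filteredness can be invoked. With that attribution fixed, the argument is complete.
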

% \begin{proof}
%   Items~(1) and~(3) are obvious, and item~(2) is Lemma~8.1 in
%   \cite{KL:measureext}.  As for (4), let us assume
%   $\upc x \cap \upc y \subseteq W$, for some open subset $W$ in the
%   upper topology of a preordered set $(P, \leq)$.  We write $W$ as
%   $\bigcup_{i \in I} (P \diff \dc E_i)$, with $E_i$ finite.  Let
%   $U \eqdef \bigcup_{i \in I} (P \diff \dc (E_i \diff \upc x))$ and
%   $V \eqdef \bigcup_{i \in I} (P \diff \dc (E_i \diff \upc y))$.
% \end{proof}

We write $cl (A)$ for the closure of $A$ in $X$.  For any $x \in X$,
$cl (\{x\})$ is the downward closure $\dc x$ of $x$ in the
specialization preordering $\leq$.

A closed subset $C$ of a space $X$ is \emph{irreducible} if and only
% if every finite list $C_1, \cdots, C_n$ of closed sets whose union is
% a superset of $C$ contains a member $C_i$ that is a superset of $C$.
% Equivalent, $C$ is irreducible if and only
if $C \neq \emptyset$ and
whenever $C$ intersects two open sets, it also intersects their
intersection.  A \emph{sober} space is a space in which every
irreducible closed set is the closure $\dc x$ of a unique point $x$.

A subset $Q$ of $X$ is compact if and only if one can extract a finite
cover from any open cover of $Q$.  We assume no separation property.
$X$ is \emph{locally compact} if and only if every point has a base of
compact neighborhoods.

A subset $A$ of a topological space is \emph{saturated} if and only if
it is equal to the intersection of its open neighborhoods, or
equivalently if and only if it is upwards-closed in the specialization
preordering $\leq$.  A space $X$ is \emph{coherent} if and only if the
intersection $Q \cap Q'$ of any two compact saturated subsets $Q$ and
$Q'$ is compact (and saturated).  A space is \emph{stably locally
  compact} if and only if it is locally compact, coherent, and sober.

A subset $D$ of a partially ordered set $(P, \leq)$ is \emph{directed}
if and only if every finite subfamily of $D$ has an upper bound in
$D$.  (In particular, $D$ is non-empty.)  $P$ is \emph{directed
  complete}, or a \emph{dcpo}, if and only if every directed family
$D$ in $P$ has a supremum $\dsup D$.  A \emph{Scott-open} subset of
$P$ is an upwards-closed subset $U$ of $P$ such that for every
directed family $D$, $\dsup D \in U$ implies that some element of $D$
is in $U$.  The Scott-open sets form the \emph{Scott topology}.

Every sober space is a \emph{monotone convergence} space
\cite[Exercise~O-5.15]{GHKLMS:contlatt}, namely a $T_0$ space such
that every directed subset $D$ % (with respect to the specialization
% ordering $\leq$)
has a supremum, and in which every open set is
Scott-open.

A subset $F$ of a partially ordered set $(P, \leq)$ is \emph{filtered}
if and only if it is directed in the opposite ordering.  A
\emph{filter} on $P$ is an upwards-closed filtered subset of $P$, and
it is a \emph{proper filter} if different from $P$.

Given a topological space $X$, let us write $\Open X$ for its set of
open subsets, ordered by inclusion.  A \emph{limit} of a filter
$\mathcal F$ on $\Open X$ is any point $x$ such that every open
neighborhood of $x$ is in $\mathcal F$.  We write $\lim \mathcal F$
for the set of limits of $\mathcal F$.  This is always a closed set.
Additionally, given any closed subset $C$ of $X$ in $\mathcal F$,
every limit of $\mathcal F$ is in $C$.

A \emph{filter base} is any filtered collection of non-empty subsets
of $X$.  By a standard use of Zorn's Lemma, every filtered base can be
completed to a maximal proper filter, namely to an \emph{ultrafilter}.
% A
% compact subset $Q$ of $X$ can be defined, equivalently, either as a
% subset such that one can extract a finite cover from any open cover of
% $Q$, or such that every ultrafilter $\mathcal U$ on $X$ that contains
% $Q$ converges to at least one point of $Q$.

\section{\bf Locally Strongly Sober=Weakly Hausdorff+Coherent+Sober}
\label{sec:locally-strongly-sob}

A space $X$ is \emph{locally strongly sober} if and only if the set
$\lim \mathcal U$ of limits of every ultrafilter $\mathcal U$ on $X$
is either empty or the closure $\dc x$ of a unique point $x$
\cite[Definition~VI-6.12]{GHKLMS:contlatt}.

% In one direction, we have the
% following.
\begin{lemma}
  \label{lemma:lss=>wh}
  Every locally strongly sober space is weakly Hausdorff.
\end{lemma}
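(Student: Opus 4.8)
The plan is to argue by contradiction. Suppose $X$ is locally strongly sober but not weakly Hausdorff, so there are points $x, y \in X$ and an open set $W \supseteq \upc x \cap \upc y$ such that for \emph{every} open neighbourhood $U$ of $x$ and every open neighbourhood $V$ of $y$ the set $(U \cap V) \diff W$ is non-empty. The family $\mathcal B$ of all such sets $(U \cap V) \diff W$ is then a filter base: it consists of non-empty subsets of $X$ by assumption, and it is filtered because $(U_1 \cap U_2 \cap V_1 \cap V_2) \diff W$ is contained in both $(U_1 \cap V_1)\diff W$ and $(U_2 \cap V_2)\diff W$, the intersection of two open neighbourhoods of $x$ (resp.\ $y$) being again one. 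By the Zorn's Lemma argument recalled in Section~\ref{sec:preliminaries}, $\mathcal B$ extends to an ultrafilter $\mathcal U$ on $X$.

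Next I would check that $x$ and $y$ are both limits of $\mathcal U$. For any open neighbourhood $U$ of $x$, taking $V = X$ shows that $U \diff W \in \mathcal B \subseteq \mathcal U$; since $U \diff W \subseteq U$ and $\mathcal U$ is upwards-closed, $U \in \mathcal U$. Hence every open neighbourhood of $x$ lies in $\mathcal U$, i.e.\ $x \in \lim \mathcal U$, and symmetrically $y \in \lim \mathcal U$. In particular $\lim \mathcal U \neq \emptyset$, so local strong soberness provides a point $z$ with $\lim \mathcal U = \dc z$.

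Then I would extract the contradiction. From $x, y \in \dc z$ we get $x \leq z$ and $y \leq z$, so $z \in \upc x \cap \upc y \subseteq W$, i.e.\ $W$ is an open neighbourhood of $z$. But $z \in \dc z = \lim \mathcal U$, so $z$ is itself a limit of $\mathcal U$, whence $W \in \mathcal U$. On the other hand $X \diff W \in \mathcal B \subseteq \mathcal U$ (take $U = V = X$), and $W \cap (X \diff W) = \emptyset$, so $\mathcal U$ would contain the empty set, contradicting its properness. This contradiction proves that $X$ is weakly Hausdorff.

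I do not expect any serious obstacle here: the one creative step is choosing the filter base $\mathcal B$ so that it simultaneously forces $x$ and $y$ into $\lim\mathcal U$ (via the truncated neighbourhoods, which sit below honest neighbourhoods) while keeping $X \diff W$ in the ultrafilter; everything else is routine bookkeeping about limits, upward closure, and properness. The only point worth double-checking is that the notion of ``ultrafilter on $X$'' used in the definition of locally strongly sober spaces coincides with the maximal-proper-filter-of-subsets notion of Section~\ref{sec:preliminaries}, and that its limit set is computed as ``every open neighbourhood belongs to $\mathcal U$'', which is exactly what the argument uses. Note also that uniqueness of $z$ plays no role; only the existence of some $z$ with $\lim\mathcal U = \dc z$ is needed.
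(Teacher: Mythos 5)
Your proposal is correct and follows essentially the same route as the paper: build the filter base of sets $(U \cap V) \diff W = U \cap V \cap (X \diff W)$, extend to an ultrafilter, observe $x, y \in \lim \mathcal U$, use local strong sobriety to get $\lim \mathcal U = \dc z$, and derive the contradiction from $z \in W$ while $X \diff W$ lies in the ultrafilter. The only cosmetic difference is that the paper concludes via the remark that any closed member of the filter contains all limits (so $z \in X \diff W$), whereas you conclude via $W, X \diff W \in \mathcal U$ contradicting properness; these are the same contradiction phrased two ways.
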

\begin{proof}
  Let $X$ be a locally strongly sober space, $x, y \in X$, and $W$ be
  an open neighborhood of $\upc x \cap \upc y$.  Let also
  $C \eqdef X \diff W$.  For the sake of contradiction, we assume that
  there is no pair of an open neighborhood $U$ of $x$ and of an open
  neighborhood $V$ of $Y$ such that $U \cap V \subseteq W$.  For all
  those pairs, $U \cap V \cap C$ is non-empty, so those sets form a
  filter base.  Let us extend it to an ultrafilter $\mathcal U$.
  Every open neighborhood of $x$ is in $\mathcal U$, so
  $x \in \lim \mathcal U$.  Similarly, $y \in \lim \mathcal U$.  Since
  $\lim \mathcal U$ is non-empty, by assumption there is a point
  $z \in X$ such that $\lim \mathcal U = \dc z$.  Now, on the one
  hand, $x$ and $y$ are in $\lim \mathcal U = \dc z$, so
  $z \in \upc x \cap \upc y \subseteq W$.  On the other hand, $C$ is
  closed and in $\mathcal U$, so every limit of $\mathcal U$ is in
  $C$.  In particular $z$ is in $C$, and that is impossible since $z$
  is in $W$.
\end{proof}

% In the other direction, we will profit from the following elementary
% lemma.  
We write $\fcap_{i \in I} C_i$ for the intersection of any filtered
family ${(C_i)}_{i \in I}$.  A subset $Q$ of $X$ is compact if and
only if for every filtered family ${(C_i)}_{i \in I}$ of closed
subsets of $X$, if $Q \cap \fcap_{i \in I} C_i = \emptyset$ then
$Q \cap C_i = \emptyset$ for some $i \in I$.
\begin{lemma}
  \label{lemma:limU}
  For every ultrafilter $\mathcal U$ on a %topological
  space $X$, $\lim \mathcal U = \fcap_{A \in \mathcal U} cl (A)$.
\end{lemma}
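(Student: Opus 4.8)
The plan is to establish the two inclusions separately, after first checking that the right-hand side is a legitimate filtered intersection in the sense of the notation introduced just above the statement: if $A, B \in \mathcal{U}$ then $A \cap B \in \mathcal{U}$ and $cl (A \cap B) \subseteq cl (A) \cap cl (B)$, so $\{cl (A) \mid A \in \mathcal{U}\}$ is indeed filtered, and $\fcap_{A \in \mathcal{U}} cl (A)$ makes sense.

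For the inclusion $\lim \mathcal{U} \subseteq \fcap_{A \in \mathcal{U}} cl (A)$, I would fix $A \in \mathcal{U}$ and a point $x \in \lim \mathcal{U}$, and show $x \in cl (A)$. If not, then $X \diff cl (A)$ is an open neighborhood of $x$, hence belongs to $\mathcal{U}$ because $x$ is a limit of $\mathcal{U}$; but $X \diff cl (A)$ is disjoint from $A$, which also lies in $\mathcal{U}$, and that is impossible in a proper filter. (Equivalently, and more slickly: $cl (A) \in \mathcal{U}$ since $A \subseteq cl (A)$ and $\mathcal{U}$ is upwards-closed, and $cl (A)$ is closed, so every limit of $\mathcal{U}$ lies in $cl (A)$ by the basic property of limits recorded in Section~\ref{sec:preliminaries}.) Intersecting over all $A \in \mathcal{U}$ gives the claim.

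For the reverse inclusion $\fcap_{A \in \mathcal{U}} cl (A) \subseteq \lim \mathcal{U}$, the ultrafilter hypothesis is what carries the argument. Take $x$ in the intersection and let $U$ be an open neighborhood of $x$; I must show $U \in \mathcal{U}$. If $U \notin \mathcal{U}$, then since $\mathcal{U}$ is an ultrafilter its complement $X \diff U$ lies in $\mathcal{U}$; as $X \diff U$ is closed, $x \in cl (X \diff U) = X \diff U$, contradicting $x \in U$. Hence every open neighborhood of $x$ is in $\mathcal{U}$, i.e.\ $x \in \lim \mathcal{U}$.

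There is no genuine obstacle here; the statement is a routine unwinding of the definitions of $\lim$ and of closure. The only two points deserving a moment's attention are that the indexing family on the right is filtered (so that $\fcap$ is well-formed) and that the reverse inclusion truly requires $\mathcal{U}$ to be an ultrafilter rather than merely a proper filter — this is exactly the place where maximality is used, through the dichotomy that $U \in \mathcal{U}$ or $X \diff U \in \mathcal{U}$.
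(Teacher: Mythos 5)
Your proof is correct and follows essentially the same route as the paper: one direction uses that $cl(A)$ is a closed member of $\mathcal U$ so every limit lies in it, and the other uses the ultrafilter dichotomy to put the complement of a missing open neighborhood into $\mathcal U$ (you argue the contrapositive of the paper's formulation, which is an immaterial difference). The extra remark that the family ${(cl (A))}_{A \in \mathcal U}$ is filtered is a harmless, if unneeded, addition.
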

\begin{proof}
  For every $A \in \mathcal U$, $cl (A)$ is a closed set in
  $\mathcal U$, so every limit of $\mathcal U$ is in $cl (A)$.
  Conversely, we show that every point $x \in X \diff \lim \mathcal U$
  is in the complement of $\fcap_{A \in \mathcal U} cl (A)$.  Since
  $x$ is not a limit of $\mathcal U$, there is an open neighborhood
  $U$ of $x$ that is not in $\mathcal U$.  Its complement
  $A \eqdef X \diff U$ is in $\mathcal U$, since $\mathcal U$ is an
  ultrafilter, and we conclude since $x$ is not in $cl (A) = A$.
\end{proof}

Let us introduce the following weakening of the notion of coherence.
\begin{definition}
  A space $X$ is \emph{weakly coherent} if and only if
  $\upc x \cap \upc y$ is compact for all points $x, y \in X$.
\end{definition}
Every coherent space, every $T_1$ space is weakly coherent.

\begin{lemma}
  \label{lemma:wh=>lss}
  Every weakly Hausdorff, weakly coherent, monotone convergence space
  $X$ is locally strongly sober.
\end{lemma}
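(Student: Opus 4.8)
The plan is to fix an ultrafilter $\mathcal U$ on $X$ with $\lim \mathcal U \neq \emptyset$ and to show that the closed set $\lim \mathcal U$ has a largest element $z$; since $\lim \mathcal U$ is then a downwards-closed set containing $z$, it must equal $\dc z$, and uniqueness of $z$ follows from $T_0$-ness of $X$, which holds because $X$ is a monotone convergence space. So the real content is to produce that largest element.

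The heart of the argument is to show that $\lim \mathcal U$ is directed in the specialization order. It is non-empty by assumption, so after the usual induction on finite subsets it suffices to check that any two points $x, y \in \lim \mathcal U$ have a common upper bound lying in $\lim \mathcal U$, i.e.\ that $\upc x \cap \upc y \cap \lim \mathcal U \neq \emptyset$. Suppose for contradiction that this intersection is empty. By Lemma~\ref{lemma:limU}, $\lim \mathcal U = \fcap_{A \in \mathcal U} cl (A)$, and $(cl (A))_{A \in \mathcal U}$ is a filtered family of closed sets, since for $A, B \in \mathcal U$ we have $A \cap B \in \mathcal U$ and $cl (A \cap B) \subseteq cl (A) \cap cl (B)$. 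Because $X$ is weakly coherent, $\upc x \cap \upc y$ is compact, and it is disjoint from $\fcap_{A \in \mathcal U} cl (A)$; the compactness criterion recalled just before Lemma~\ref{lemma:limU} therefore yields some $A \in \mathcal U$ with $\upc x \cap \upc y \cap cl (A) = \emptyset$. Then $W \eqdef X \diff cl (A)$ is an open neighbourhood of $\upc x \cap \upc y$, so by weak Hausdorffness there are an open neighbourhood $U$ of $x$ and an open neighbourhood $V$ of $y$ with $U \cap V \subseteq W$, hence $U \cap V \cap A = \emptyset$ (using $A \subseteq cl(A)$). But $x \in \lim \mathcal U$ forces $U \in \mathcal U$, and likewise $V \in \mathcal U$; together with $A \in \mathcal U$ this gives $\emptyset = U \cap V \cap A \in \mathcal U$, contradicting properness of the ultrafilter $\mathcal U$.

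Once $\lim \mathcal U$ is known to be directed and non-empty, I would invoke the monotone convergence property: as a directed subset of $X$ it has a supremum $z \eqdef \dsup \lim \mathcal U$ in $X$. Since $\lim \mathcal U$ is closed, its complement is open, hence Scott-open, so $z$ cannot lie in that complement; thus $z \in \lim \mathcal U$, and $z$ is its greatest element. As explained above, this gives $\lim \mathcal U = \dc z$ with $z$ uniquely determined, so $X$ is locally strongly sober.

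I expect the only genuine obstacle to be arranging the interaction between weak coherence, weak Hausdorffness, and the ultrafilter in the directedness step — specifically, choosing to apply the compactness criterion to the filtered family $(cl(A))_{A \in \mathcal U}$ and then using $A \subseteq cl(A)$ so that disjointness of $\upc x\cap\upc y$ from some $cl(A)$ upgrades to disjointness of a member of $\mathcal U$ from the filter-intersection $U \cap V$. The remaining ingredients — that closed sets in a monotone convergence space are closed under directed suprema, and the passage from ``greatest element'' to ``point closure'' together with $T_0$ uniqueness — are routine.
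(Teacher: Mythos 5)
Your proof is correct and follows essentially the same route as the paper: use weak coherence plus Lemma~\ref{lemma:limU} and weak Hausdorffness to derive a contradiction with the ultrafilter property, conclude that $\lim \mathcal U$ is directed, then use the monotone convergence property to obtain the largest element $z$ with $\lim \mathcal U = \dc z$ and $T_0$-ness for uniqueness. The only differences are cosmetic (you verify explicitly that $(cl(A))_{A \in \mathcal U}$ is filtered and phrase the final contradiction as $\emptyset = U \cap V \cap A \in \mathcal U$ instead of $W \cap A = \emptyset$).
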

\begin{proof}
  Let $\mathcal U$ be an ultrafilter on $X$, with
  $\lim \mathcal U \neq \emptyset$.
  For any two points $x, y \in \lim \mathcal U$, $\upc x \cap \upc y$
  is compact since $X$ is weakly coherent.  Let us assume that
  $\upc x \cap \upc y$ does not intersect $\lim \mathcal U$.  Since
  $\lim \mathcal U = \fcap_{A \in \mathcal U} cl (A)$ by
  Lemma~\ref{lemma:limU},
  % and since
  % $\upc x \cap \upc y \cap \lim \mathcal U = \emptyset$,
  $\upc x \cap \upc y \cap cl (A) = \emptyset$ for some
  $A \in \mathcal U$.  Let $W \eqdef X \diff cl (A)$. By weak
  Hausdorffness, there are an open neighborhood $U$ of $x$ and an open
  neighborhood $V$ of $y$ such that $U \cap V \subseteq W$.  Since
  $x \in \lim \mathcal U$, $U$ is in $\mathcal U$, and similarly $V$
  is in $\mathcal U$; therefore $U \cap V$, and then also $W$, is in
  $\mathcal U$.  But $A \in \mathcal U$, so $W \cap A$ is in
  $\mathcal U$; that is impossible, since $W \cap A = \emptyset$.

  This shows that $\lim \mathcal U$ is directed.  Since $X$ is a
  monotone convergence space, $\lim \mathcal U$ has a supremum $z$,
  and $z \in \lim \mathcal U$ because $\lim \mathcal U$ is closed.
  Every closed set is downwards-closed, so $\lim \mathcal U = \dc z$.
  The uniqueness of $z$ follows from the fact that every monotone
  convergence space is $T_0$.
\end{proof}

Every locally strongly sober space is sober
\cite[Lemma~VI-6.13]{GHKLMS:contlatt} and coherent
\cite[Lemma~VI-6.14]{GHKLMS:contlatt}.
% Every coherent space is weakly coherent, and every sober space is a
% monotone convergence space \cite[Exercise~O-5.15]{GHKLMS:contlatt}.
This, together with Lemma~\ref{lemma:lss=>wh} and
Lemma~\ref{lemma:wh=>lss}, leads to the following.
\begin{theorem}
  \label{thm:lss=wh}
  For every topological space $X$, it is equivalent that $X$ be:
  \begin{enumerate}
  \item locally strongly sober;
  \item a weakly Hausdorff, weakly coherent, monotone
    convergence space;
  \item or a weakly Hausdorff, coherent, sober space.
  \end{enumerate}
\end{theorem}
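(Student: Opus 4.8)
The plan is to establish the three-way equivalence by closing a cycle of implications, say $(1)\Rightarrow(3)\Rightarrow(2)\Rightarrow(1)$. Almost all of the substance has already been isolated in the lemmas and cited facts above, so what remains is essentially bookkeeping: checking that each arrow of the cycle is available from something proved earlier.

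For $(1)\Rightarrow(3)$, I would take $X$ locally strongly sober. Lemma~\ref{lemma:lss=>wh} gives weak Hausdorffness directly, and the two results cited just before the statement (\cite[Lemma~VI-6.13]{GHKLMS:contlatt} and \cite[Lemma~VI-6.14]{GHKLMS:contlatt}) give that $X$ is sober and coherent. Hence $(3)$ holds. For $(3)\Rightarrow(2)$, I would start from a weakly Hausdorff, coherent, sober space $X$ and argue that it is a weakly Hausdorff, weakly coherent, monotone convergence space. Weak coherence follows from coherence: each singleton $\{x\}$ is compact, so its saturation $\upc x$ is compact saturated, and then $\upc x \cap \upc y$ is compact by coherence. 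The monotone convergence property follows from sobriety, as recalled in Section~\ref{sec:preliminaries}. Together with the (unchanged) weak Hausdorffness, this is exactly $(2)$. Finally, $(2)\Rightarrow(1)$ is precisely Lemma~\ref{lemma:wh=>lss}.

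There is no genuinely hard step left in the theorem itself: the real work — the ultrafilter and filter-base constructions — lives in Lemmas~\ref{lemma:lss=>wh} and~\ref{lemma:wh=>lss}. The only point that warrants a moment's care in the assembly is the implication \emph{coherent $\Rightarrow$ weakly coherent}, which hinges on recognizing $\upc x$ as the saturation of the compact set $\{x\}$, hence compact and saturated; once that observation is made, the cycle closes with no further computation, and the proof is complete.
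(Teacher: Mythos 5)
Your proof is correct and follows essentially the same route as the paper: the paper derives the theorem from Lemma~\ref{lemma:lss=>wh}, Lemma~\ref{lemma:wh=>lss}, and the cited facts that locally strongly sober spaces are sober and coherent, exactly the cycle $(1)\Rightarrow(3)\Rightarrow(2)\Rightarrow(1)$ you assemble. Your explicit justification that coherence implies weak coherence (via $\upc x$ being the compact saturation of $\{x\}$) and that sobriety implies monotone convergence is precisely the bookkeeping the paper leaves implicit.
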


\section{\bf Remarks and Examples}
\label{sec:remarks-examples}

\begin{remark}
  \label{rem:6.8}
  Theorem~6.8 of \cite{KL:measureext} states that given any weakly
  Hausdorff coherent sober space $X$, every locally finite inner
  regular valuation on the lattice of open subsets of $X$ extends to a
  $\sigma$-additive measure on a $\sigma$-algebra containing all the
  open sets and all the compact saturated sets.  By
  Theorem~\ref{thm:lss=wh}, the assumption on $X$ can be read as
  ``given any locally strongly sober space $X$''.
\end{remark}

\begin{remark}
  \label{rem:sloccomp}
  A space is stably locally compact if and only if it is locally
  compact and locally strongly sober
  \cite[Corollary~VI-6.16]{GHKLMS:contlatt}.  With
  Theorem~\ref{thm:lss=wh}, this allows us to give an equivalent
  definition as a conjunction of apparently weaker properties: a space
  is stably locally compact if and only if it is a core-compact,
  weakly Hausdorff, weakly coherent, monotone convergence space.  (A
  space is \emph{core-compact} if and only if its lattice of open sets
  is continuous.  A dcpo is \emph{continuous} if and only if every
  element is the supremum of a directed family of elements way-below
  it; the way-below relation $\ll$ is defined by $u \ll v$ if and only
  if every directed family $D$ such that $v \leq \bigvee D$ contains
  an element larger than or equal to $u$
  \cite[Section~I-1]{GHKLMS:contlatt}.  Every locally compact space is
  core-compact \cite[Example~I-1.7(5)]{GHKLMS:contlatt}, but not
  conversely \cite[Section~7]{HL:spectral}.  But every core-compact
  sober space is locally compact
  \cite[Theorem~V-5.6]{GHKLMS:contlatt}.)  I said ``apparently
  weaker'': in the presence of weak Hausdorffness, the following shows
  that monotone convergence equals sobriety, and that weak coherence
  equals coherence.
\end{remark}

\begin{proposition}
  \label{prop:monconv}
  Every weakly Hausdorff monotone convergence space $X$ is sober.
\end{proposition}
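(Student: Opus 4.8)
The plan is to prove the stronger-looking but equivalent statement that every irreducible closed subset $C$ of $X$ has the form $\dc z$ for a unique $z$. Uniqueness is free, since every monotone convergence space is $T_0$, so the whole task is existence. The strategy is to show that $C$, viewed as a subset of $(X,\leq)$, is directed, and then to let the monotone convergence hypothesis produce the point $z$.

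The one preliminary observation I would record is that closed subsets of a monotone convergence space are Scott-closed, i.e.\ downwards-closed and closed under those directed suprema that exist. Downwards-closure is automatic from the definition of the specialization preorder. For the other half, if $D$ is a directed subset of a closed set $C$ and $\dsup D$ lay in the open set $X\diff C$, then Scott-openness of $X\diff C$ would put some element of $D$ into $X\diff C$, contradicting $D\subseteq C$; hence $\dsup D\in C$. This is routine, but it is the point where the monotone convergence hypothesis is genuinely used.

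The core of the argument is that $C$ is directed. It is non-empty because it is irreducible, so it only remains to find, for arbitrary $x,y\in C$, an upper bound of $\{x,y\}$ inside $C$, i.e.\ to show $\upc x\cap\upc y\cap C\neq\emptyset$. Suppose not; then $\upc x\cap\upc y$ is contained in the open set $W\eqdef X\diff C$, so $W$ is an open neighbourhood of $\upc x\cap\upc y$. Weak Hausdorffness then yields an open neighbourhood $U$ of $x$ and an open neighbourhood $V$ of $y$ with $U\cap V\subseteq W$. But $C$ meets both $U$ and $V$ (it contains $x$ and $y$), so by irreducibility $C$ meets $U\cap V\subseteq W=X\diff C$, which is absurd.

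Finally, since $X$ is a monotone convergence space, the directed set $C$ has a supremum $z=\dsup C$; by the preliminary observation $z\in C$, and since $C$ is downwards-closed we get $C=\dc z$, so $X$ is sober. I do not anticipate a real obstacle: the argument is short, and the only step requiring a moment's care is the preliminary one, that a closed set in a monotone convergence space contains the directed suprema of its members.
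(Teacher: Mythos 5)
Your proof is correct and follows essentially the same route as the paper's: weak Hausdorffness plus irreducibility shows that $C$ is directed, and the monotone convergence property then supplies $z = \dsup C \in C$ with $C = \dc z$, uniqueness coming from $T_0$. Your preliminary observation that closed sets contain the suprema of their directed subsets is just an explicit spelling-out of a step the paper states without comment.
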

\begin{proof}
  Let $C$ be an irreducible closed subset of $X$.  For any two points
  $x$, $y$ of $C$, if $\upc x \cap \upc y$ were included in the
  complement $W$ of $C$, then $W$ would contain an intersection
  $U \cap V$ of an open neighborhood $U$ of $x$ and of an open
  neighborhood $V$ of $y$.  Since $C$ is irreducible, $U \cap V$, and
  therefore also $W$, would intersect $C$, but that is impossible.
  Hence $\upc x \cap \upc y$ intersects $C$.  This shows that $C$ is
  directed.  Since $X$ is a monotone convergence space,
  $z \eqdef \dsup C$ exists and is in $C$, so $C = \dc z$.  Since $X$
  is $T_0$, $z$ is unique, and therefore $X$ is sober.
\end{proof}

\begin{proposition}
  \label{prop:coh}
  Every weakly Hausdorff, weakly coherent space $X$ is coherent.
\end{proposition}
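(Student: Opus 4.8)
The plan is to prove that the intersection $Q \cap Q'$ of any two compact saturated subsets $Q$, $Q'$ of $X$ is compact; it is automatically saturated, being an intersection of upwards-closed sets. I would work with the standard characterisation of compactness via directed open families: a subset $K$ of $X$ is compact if and only if, for every directed family $\mathcal D$ of open subsets of $X$ with $K \subseteq \bigcup_{W \in \mathcal D} W$, some single $W \in \mathcal D$ already satisfies $K \subseteq W$ (this is the complement of the filtered-closed-set characterisation recalled just before Lemma~\ref{lemma:limU}). So I fix a directed family $\mathcal D$ of open sets with $Q \cap Q' \subseteq \bigcup_{W \in \mathcal D} W$ and aim to produce one member of $\mathcal D$ containing $Q \cap Q'$.

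The first step is local. For $x \in Q$ and $y \in Q'$, since $Q$ and $Q'$ are saturated, hence upwards-closed, we have $\upc x \subseteq Q$ and $\upc y \subseteq Q'$, so $\upc x \cap \upc y \subseteq Q \cap Q' \subseteq \bigcup_{W \in \mathcal D} W$. By weak coherence $\upc x \cap \upc y$ is compact, and $\mathcal D$ is directed, so there is $W_{x,y} \in \mathcal D$ with $\upc x \cap \upc y \subseteq W_{x,y}$. As $W_{x,y}$ is an open neighbourhood of $\upc x \cap \upc y$, weak Hausdorffness supplies an open neighbourhood $U_{x,y}$ of $x$ and an open neighbourhood $V_{x,y}$ of $y$ with $U_{x,y} \cap V_{x,y} \subseteq W_{x,y}$.

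Then comes a double tube-lemma-style compactness extraction. Fix $x \in Q$; the family ${(V_{x,y})}_{y \in Q'}$ is an open cover of $Q'$, so compactness of $Q'$ gives $y_1, \dots, y_n \in Q'$ with $Q' \subseteq \bigcup_{j=1}^{n} V_{x,y_j}$, and I set $U_x \eqdef \bigcap_{j=1}^{n} U_{x,y_j}$, an open neighbourhood of $x$. If $z \in Q' \cap U_x$ then $z \in V_{x,y_j}$ for some $j$ and $z \in U_x \subseteq U_{x,y_j}$, so $z \in U_{x,y_j} \cap V_{x,y_j} \subseteq W_{x,y_j}$; hence $Q' \cap U_x \subseteq \bigcup_{j=1}^{n} W_{x,y_j}$, and directedness of $\mathcal D$ yields $W_x \in \mathcal D$ with $Q' \cap U_x \subseteq W_x$. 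Finally ${(U_x)}_{x \in Q}$ is an open cover of $Q$, so compactness of $Q$ gives $x_1, \dots, x_m \in Q$ with $Q \subseteq \bigcup_{i=1}^{m} U_{x_i}$; for $z \in Q \cap Q'$ we then have $z \in U_{x_i}$ for some $i$ and $z \in Q'$, whence $z \in Q' \cap U_{x_i} \subseteq W_{x_i}$, so $Q \cap Q' \subseteq \bigcup_{i=1}^{m} W_{x_i}$, and one last use of directedness produces the required single $W \in \mathcal D$ with $Q \cap Q' \subseteq W$.

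I do not anticipate a genuine obstacle: the content is bookkeeping for the two nested compactness arguments, and the precise reason weak Hausdorffness is exactly what is needed is that it lets us trade the chosen open set $W_{x,y} \supseteq \upc x \cap \upc y$ for a ``rectangle'' $U_{x,y} \cap V_{x,y}$ around $(x,y)$, which is what makes the tube argument run first over $Q'$ and then over $Q$. This is the same mechanism already exploited in the proof of Proposition~\ref{prop:monconv}.
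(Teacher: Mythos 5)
Your proof is correct, and it rests on the same two ingredients as the paper's, used in the same order: weak coherence shrinks the given cover of $\upc x \cap \upc y \subseteq Q \cap Q'$ to something finite (in your case, to a single member $W_{x,y}$ of the directed family), and weak Hausdorffness trades that for a rectangle $U_{x,y} \cap V_{x,y} \subseteq W_{x,y}$, after which compactness of $Q$ and $Q'$ finishes the argument. The difference lies in how the final double extraction is organized. The paper works with an arbitrary open cover ${(W_i)}_{i \in I}$, keeps a finite index set $J_{xy}$ for each pair, and then extracts finite subcovers of $Q_1$ and of $Q_2$ \emph{in parallel}, writing the neighborhoods as $U_x$ and $V_y$, each indexed by a single point, so that the final finite family is $\{W_i \mid i \in J_{xy},\ x \in E_1,\ y \in E_2\}$. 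You instead pass to directed covers and \emph{nest} the two extractions, tube-lemma style: for fixed $x$ you cover $Q'$ by the sets $V_{x,y}$, keep finitely many $y_j$, and intersect the corresponding $U_{x,y_j}$ to obtain a neighborhood $U_x$ of $x$ that no longer depends on $y$. This nesting is precisely what justifies working with neighborhoods that genuinely depend on the pair $(x,y)$ --- which is all that Definition~\ref{defn:wH} provides --- whereas the parallel extraction needs the $U$'s and $V$'s to depend on one coordinate each, a point the paper's notation glosses over. So your route is slightly longer, but it is airtight on exactly that delicate point; starting from the directed-cover characterization of compactness rather than arbitrary covers is an equivalent, essentially cosmetic, variation.
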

\begin{proof}
  Let $Q_1$, $Q_2$ be two compact saturated subsets of $X$, and let
  ${(W_i)}_{i \in I}$ be an open cover of $Q_1 \cap Q_2$.  For each
  pair $(x, y) \in Q_1 \times Q_2$, $\upc x \cap \upc y$ is included in
  $Q_1 \cap Q_2$, hence in $\bigcup_{i \in I} W_i$.  Since $X$ is
  weakly coherent, there is a finite subset $J_{xy}$ of $I$ such that
  $\upc x \cap \upc y \subseteq \bigcup_{i \in J_{xy}} W_i$.  Since
  $X$ is weakly Hausdorff, $x$ has an open neighborhood $U_x$ and $y$
  has an open neighborhood $V_y$ such that
  $U_x \cap V_y \subseteq \bigcup_{i \in J_{xy}} W_i$.  The sets
  $U_x$, $x \in Q_1$, form an open cover of $Q_1$, so there is a
  finite subset $E_1$ of $Q_1$ such that
  $Q_1 \subseteq \bigcup_{x \in E_1} U_x$.  Similarly, there is a
  finite subset $E_2$ of $Q_2$ such that
  $Q_2 \subseteq \bigcup_{y \in E_2} V_y$.  Then the sets $W_i$, where
  $i \in \bigcup_{x \in E_1, y \in E_2} J_{xy}$, form a finite subcover
  of $Q_1 \cap Q_2$.
  % Let us define an orthogonality relation $\perp$ between compact
  % saturated sets by $Q_1 \perp Q_2$ if and only if every open
  % neighborhood $W$ of $Q_1 \cap Q_2$ contains an intersection
  % $U \cap V$ of an open neighborhood $U$ of $Q_1$ and of an open
  % neighborhood $V$ of $Q_2$.  By assumption, $\upc x \perp \upc y$ for
  % all points $x, y \in X$.  Let us say that a compact saturated set
  % $Q$ is \emph{good} if and only if it is orthogonal to every compact
  % saturated set, and \emph{pretty good} if and only if it is
  % orthogonal to every set of the form $\upc y$, $y \in X$.
%
  % We claim that every pretty good set $Q_1$ is good.  In order to show
  % this, we consider any compact saturated set $Q_2$, and any open
  % neighborhood $W$ of $Q_1 \cap Q_2$; we must find an open
  % neighborhood $U$ of $Q_1$ and an open neighborhood $V$ of $Q_2$ such
  % that $U \cap V \subseteq W$.  For each $y \in Q_2$, since $Q_1$ is
  % pretty good, there is an open neighborhood $U_y$ of $Q_1$ and an
  % open neighborhood $V_y$ of $y$ such that $U_y \cap V_y \subseteq W$.
  % Since $Q_2$ is compact, there is a finite subset $E$ of $Q_2$ such
  % that $Q_2 \subseteq V \eqdef \bigcup_{y \in E} V_y$.  Let
  % $U \eqdef \bigcap_{y \in E} U_y$.  Then $U$ is an open neighborhood
  % of $Q_1$, and $U \cap V \subseteq W$, as desired.
%
  % By assumption, every set of the form $\upc x$ is pretty good, hence
  % good.  This means that $\upc x \perp Q$ for every compact saturated
  % set $Q$, for every $x \in X$.  In other words, every compact
  % saturated set $Q$ is pretty good; hence every compact saturated set
  % is good.
\end{proof}

\begin{remark}
  \label{rem:jjl}
  Jia, Jung and Li show that every well-filtered weakly coherent dcpo
  is coherent in its Scott topology \cite{JJL:dcpo:coh}.  A space is
  \emph{well-filtered} if and only if given any filtered family
  ${(Q_i)}_{i \in I}$ of compact saturated subsets, and any open
  subset $U$, $\fcap_{i \in I} Q_i \subseteq U$ if and only if some
  $Q_i$ is included in $U$.  All sober spaces are well-filtered.
  Proposition~\ref{prop:coh} states a similar result, generalizing
  from dcpos to all topological spaces, and replacing
  well-filteredness by weak Hausdorffness.  Nonetheless, it is no
  generalization of the Jia-Jung-Li theorem, see
  Example~\ref{ex:Isbell}.
\end{remark}

The main purpose of the following examples is to show that
the terms ``weakly Hausdorff'', ``(weakly) coherent'', and ``sober''
(or ``monotone convergence space'') in Theorem~\ref{thm:lss=wh}~(3)
are irredundant.  They can more generally be used to get a better
grasp of the notions.

\begin{example}[Weakly Hausdorff, sober $\not\limp$ weakly coherent]
  \label{ex:Nab}
  Let us consider the dcpo $\nat \cup \{a, b\}$, where all the natural
  numbers are pairwise incomparable, and $a$ and $b$ are themselves
  incomparable and less than all natural numbers.  The Scott topology
  coincides with the Alexandroff topology, hence this dcpo is weakly
  Hausdorff.  It is also sober, but it is not weakly coherent, as
  $\upc a \cap \upc b = \nat$ is not compact.
\end{example}

\begin{example}[Weakly Hausdorff, coherent $\not\limp$ monotone
  convergence, sober]
  \label{ex:poset}
  Let us consider any dcpo $P$ in its Alexandroff (not Scott)
  topology.  If $P$ has an infinite ascending chain
  $x_0 < x_1 < \cdots < x_n < \cdots$, then $P$ is not a monotone
  convergence space, since the open set $\upc \dsup_{n \in \nat} x_n$
  is not Scott-open.  However, $P$ is weakly Hausdorff, and is
  coherent if and only if $\upc x \cap \upc y$ can be written as
  $\upc E$ for some finite set $E$, for all points $x$ and $y$.
  (Indeed, the compact subsets of $P$ are exactly the upward closures
  of finite sets.)  Hence, for example, $\nat \cup \{\omega\}$ (with
  $\omega > n$ for every $n \in \nat$) is weakly Hausdorff, coherent,
  but not a monotone convergence space in the Alexandroff topology of
  its natural ordering.
  % The irreducible closed subsets of a partially ordered set $P$ in its
  % Alexandroff topology are its ideals, namely, its downwards-closed
  % directed subsets \cite[Lemmata~1.1, 1.2]{Hoffmann:Idl:S}, and then
  % $P$ is sober if and only if every ascending chain is eventually
  % stationary \cite[Theorem~1.5
  % (iv)$\Leftrightarrow$(v)]{Hoffmann:Idl:S}.  Hence every partially
  % ordered set $P$ with at least one infinite ascending chain is weakly
  % Hausdorff and coherent, but not sober in its Alexandroff topology.
  % The simplest example is $\nat$ with its natural ordering.
\end{example}

\begin{example}[Coherent $\not\limp$ sober, weakly Hausdorff]
  \label{ex:Ncof}
  The space $\nat$ with the cofinite topology is Noetherian (every
  subspace is compact), hence coherent, but neither sober nor weakly
  Hausdorff.  It is not sober because $\nat$ itself is irreducible
  closed.  It is not weakly Hausdorff because for any two distinct
  points $x$, $y$, $\upc x \cap \upc y = \{x\} \cap \{y\}$ is empty,
  but all open neighborhoods of $x$ and of $y$
  intersect.% at infinitely many
  % points.
\end{example}

\begin{example}[Sober, coherent $\not\limp$ weakly Hausdorff]
  \label{ex:aQ}
  A \emph{KC-space} is a space in which every compact set is closed.
  All KC-spaces are coherent, and also $T_1$ since one-element sets
  are compact, hence closed.
  % There is a 
  % Counterexample~99 of \cite{SS:countexamples}
  Let $\alpha (\rat)$ denote $\rat \cup \{\infty\}$, the one-point
  compactification of the rational numbers.  Its open subsets are the
  open subsets of $\rat$, plus the sets $\alpha (\rat) \diff K$, where
  $K$ is compact in $\rat$.  This is a $T_1$, non-Hausdorff space
  \cite[Counterexample~35]{SS:countexamples}, hence is not weakly
  Hausdorff.  We argue that it is sober as follows.  Let $C$ be
  irreducible closed.  $C$ cannot contain two distinct rational
  numbers $x$ and $y$, otherwise there would be an open neighborhood
  $U$ of $x$ in $\rat$ and an open neighborhood $V$ of $y$ in $\rat$
  whose intersection is empty.
  % , and in particular does not intersect
  % $C$.
  If $C$ contains a rational number $x$ and $\infty$, therefore, $C$
  must be equal to $\{x, \infty\}$; then $C$ intersects
  ${]x-1, x+1[} \cap \rat$ and $\alpha (\rat) \diff \{x\}$, but not
  their intersection.  Hence $C$ contains only one point.  A result of
  Alas and Wilson states that the one-point compactification of a
  countable KC-space $X$ is a KC-space if and only if $X$ is
  sequential \cite[Corollary~7]{AW:KC}.  It follows that
  $\alpha (\rat)$ is a KC-space, hence is coherent.
  % every weak Hausdorff space is a KC-space, hence is T1 and coherent
  % (wikipedia)
  % every locally Hausdorff space is T1 and sober (Niefeld)
\end{example}

\begin{example}[Coherent, well-filtered $\not\limp$ sober, weakly Hausdorff]
  \label{ex:Isbell}
  Isbell constructed a complete lattice $L$ that is not sober in its
  Scott topology \cite{Isbell:nonsober}.  $L$ is well-filtered,
  because every complete lattice is \cite{LX:wf}.  It is also weakly
  coherent, because every complete lattice is, trivially.  Hence it is
  coherent, by the Jia-Jung-Li theorem cited in Remark~\ref{rem:jjl}.
  However, it is not weakly Hausdorff, otherwise it would be sober by
  Theorem~\ref{thm:lss=wh}.
\end{example}

% \begin{remark}
%   \label{rem:lattice}
%   The remark that every weakly Hausdorff complete lattice is sober in
%   its Scott topology was already made in
%   \cite[Proposition~4.14]{GLJ:bary}, where it is also proved that a
%   complete lattice is weakly Hausdorff if and only if binary supremum
%   is jointly continuous, and that every core-compact complete lattice
%   is weakly Hausdorff.  % Since every complete lattice is weakly
%   % coherent, compact, and a monotone convergence space,
%   Theorem~\ref{thm:lss=wh} then implies that every core-compact
%   complete lattice is stably compact (namely, stably locally compact
%   and compact) That can also be deduced from Theorem~III-1.9 and
%   Proposition~VI-6.26 of \cite{GHKLMS:contlatt}.
%   % p.522
% \end{remark}

\section{\bf The Stone duals of locally strongly sober spaces}
\label{sec:stone-duals-locally}

For every topological space $X$, the set $\Open X$ of open subsets of
$X$ ordered by inclusion is a \emph{frame}, namely a complete lattice
such that
$u \wedge \bigvee_{i \in I} v_i = \bigvee_{i \in I} (u \wedge v_i)$
for all elements $u$ and $v_i$, $i \in I$.  This extends to a functor
from the category $\Topcat$ of topological spaces and continuous maps
to the opposite $\Loccat \eqdef \Frmcat^{op}$ of the category of
frames $\Frmcat$ (whose morphisms are the maps that preserve all
suprema and finite infima): for a continuous map $f \colon X \to Y$,
$\Open f$ maps $V \in \Open Y$ to $f^{-1} (V)$.

Conversely, given any frame $L$, a filter $\Phi$ on $L$ is
\emph{completely prime} if and only if
$\bigvee_{i \in I} u_i \in \Phi$ implies that $u_i \in \Phi$ for some
$i \in I$.  There is a sober space $\pt L$, whose points are the
completely prime filters, and whose open subsets are
$\Open_u \eqdef \{x \in \pt L \mid u \in x\}$, $u \in L$.  This
extends to a functor $\pt \colon \Loccat \to \Topcat$; for every frame
morphism $\varphi \colon L \to M$,
$\pt \varphi \colon \pt M \to \pt L$ maps every $y \in \pt M$ to
$\varphi^{-1} (y)$.  Additionally, $\Open$ is left-adjoint to $\pt$,
and this adjunction and cuts down to an equivalence between the full
subcategories of sober spaces and so-called spatial frames on the
other side.  (See Section~V-4 of \cite{GHKLMS:contlatt}, for example.)

We will identify the class of spatial frames that are related to the
locally strongly sober spaces through this equivalence.  In order to
do so, for any two filters $\mathcal F$, $\mathcal G$ on a frame $L$,
let us define the \emph{join} $\mathcal F \vee \mathcal G$ as
$\{u \wedge v \mid u \in \mathcal F, v \in \mathcal G\}$.  This is a
filter on $L$, and in fact the supremum of $\mathcal F$ and
$\mathcal G$ in the poset of all filters on $L$, ordered by inclusion.
\begin{definition}
  \label{defn:stable}
  A frame $L$ is \emph{locally temperate} if and only if the join
  $\mathcal F \vee \mathcal G$ of any two Scott-open filters on $L$ is
  Scott-open.  It is \emph{temperate} if and only if it is locally
  temperate and the smallest filter $\{\top\}$ (where $\top$ is the
  largest element of $L$) is Scott-open.
\end{definition}
Alternatively, a temperate frame is a frame in which the join of any
finite family of Scott-open filters is Scott-open.  We note that
$\{\top\}$ is Scott-open in $\Open X$ if and only if $X$ is compact.

% \begin{remark}
%   \label{rem:stable}
%   There is a notion of \emph{stable} frame
%   \cite[Definition~2.10]{BH:scomp:frames}, in which $u \ll v$ and $u
%   \ll w$ implies $u \ll v \wedge w$ for all elements $u, v, w$;
%   equivalently, such that $\uuarrow u$ is a filter for every element
%   $u$.
%   A %!!!
%   %!!! define \ll
% \end{remark}

We will rely on the Hofmann-Mislove theorem
\cite[Theorem~II-1.20]{GHKLMS:contlatt}: on a sober space $X$, the map
$\mathcal F \mapsto \bigcap \mathcal F$ defines an order-isomorphism
between the space of Scott-open filters on $\Open X$ and the space of
compact saturated subsets of $X$ ordered by reverse inclusion.  The
inverse map maps every compact saturated subset $Q$ to its set of open
neighborhoods $\blacksquare Q$.  We also rely on the following.
\begin{fact}[Lemma~6.6 of \cite{KL:measureext}]
  \label{fact:wH}
  A space $X$ is weakly Hausdorff if and only if for every pair of
  compact saturated subsets $Q_1$ and $Q_2$, for every open
  neighborhood $W$ of $Q_1 \cap Q_2$, there is an open neighborhood
  $U$ of $Q_1$ and and open neighborhood $V$ of $Q_2$ such that $U
  \cap V \subseteq W$.
\end{fact}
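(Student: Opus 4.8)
The plan is to prove the two directions of the ``if and only if'' separately. The direction from the compact-saturated condition to Definition~\ref{defn:wH} is essentially immediate, while the converse is a double compactness argument of exactly the shape used in the proof of Proposition~\ref{prop:coh}. For the easy direction I would note that, for any $x, y \in X$, the sets $\upc x$ and $\upc y$ are saturated by definition and compact (any open neighbourhood of $x$ already contains $\upc x$, since open sets are upwards-closed, so $\upc x$ has a one-element subcover in any open cover), and that $\upc x \cap \upc y$ is precisely the set occurring in Definition~\ref{defn:wH}. Hence, if $W$ is an open neighbourhood of $\upc x \cap \upc y$, applying the stated property to $Q_1 \eqdef \upc x$ and $Q_2 \eqdef \upc y$ produces open $U \ni x$ and $V \ni y$ with $U \cap V \subseteq W$, which is exactly weak Hausdorffness.

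For the converse, assume $X$ is weakly Hausdorff, let $Q_1, Q_2$ be compact saturated, and let $W$ be an open neighbourhood of $Q_1 \cap Q_2$. The first observation is that for each $(x,y) \in Q_1 \times Q_2$ we have $\upc x \subseteq Q_1$ and $\upc y \subseteq Q_2$ (saturated sets are upwards-closed), so $\upc x \cap \upc y \subseteq Q_1 \cap Q_2 \subseteq W$; weak Hausdorffness then supplies open $U_{x,y} \ni x$ and $V_{x,y} \ni y$ with $U_{x,y} \cap V_{x,y} \subseteq W$. Next I would apply compactness twice. Fixing $y \in Q_2$, the sets $U_{x,y}$, $x \in Q_1$, cover $Q_1$, so finitely many of them, indexed by some finite $E_y \subseteq Q_1$, do; put $U_y \eqdef \bigcup_{x \in E_y} U_{x,y}$ and $V_y \eqdef \bigcap_{x \in E_y} V_{x,y}$, both open, with $Q_1 \subseteq U_y$, $y \in V_y$, and $U_y \cap V_y \subseteq \bigcup_{x \in E_y} (U_{x,y} \cap V_{x,y}) \subseteq W$ by distributing $\cap$ over $\cup$. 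Then the sets $V_y$, $y \in Q_2$, cover $Q_2$, so finitely many, indexed by some finite $F \subseteq Q_2$, do; put $U \eqdef \bigcap_{y \in F} U_y$ and $V \eqdef \bigcup_{y \in F} V_y$, both open, with $Q_1 \subseteq U$, $Q_2 \subseteq V$, and $U \cap V \subseteq \bigcup_{y \in F} (U_y \cap V_y) \subseteq W$. These $U$ and $V$ are the required neighbourhoods.

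I expect no genuine obstacle here; the only delicate point is the bookkeeping in the two compactness steps — one must take \emph{unions} on the side that covers the $Q_i$ and \emph{intersections} on the side of the auxiliary neighbourhoods, and in that order, so that the distributive law collapses the finitely many inclusions $U_{x,y} \cap V_{x,y} \subseteq W$ (then $U_y \cap V_y \subseteq W$) into the single inclusion $U \cap V \subseteq W$. Since this is literally the manoeuvre already carried out in the proof of Proposition~\ref{prop:coh}, one could alternatively cite that argument as a template rather than reproducing it.
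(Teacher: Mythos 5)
Your argument is correct in both directions: the easy direction rightly uses that $\upc x$ and $\upc y$ are compact saturated, and the converse is the standard double compactness argument, with the bookkeeping (unions on the covering side, intersections on the auxiliary side, in that order) handled properly. Note that the paper itself gives no proof of Fact~\ref{fact:wH} --- it is quoted as Lemma~6.6 of Keimel and Lawson --- and their proof is essentially the one you give, which, as you observe, also mirrors the maneuver in the proof of Proposition~\ref{prop:coh}.
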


\begin{lemma}
  \label{lemma:wHc:stable}
  The following are equivalent for every sober space $X$:
  \begin{enumerate}
  \item $\Open X$ is locally temperate;
  \item $X$ is weakly Hausdorff and coherent;
  \item $X$ is locally strongly sober.
  \end{enumerate}
\end{lemma}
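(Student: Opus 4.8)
The plan is to establish $(1) \Leftrightarrow (2)$ by hand and to read $(2) \Leftrightarrow (3)$ off Theorem~\ref{thm:lss=wh}: since $X$ is assumed sober it is a monotone convergence space, so condition~(2) here coincides with item~(3) of that theorem, which the theorem identifies with being locally strongly sober, i.e.\ condition~(3) here. The two workhorses for $(1) \Leftrightarrow (2)$ will be the Hofmann--Mislove theorem in its sharp form --- every Scott-open filter $\mathcal F$ on $\Open X$ equals $\blacksquare(\bigcap \mathcal F)$, and $\bigcap \mathcal F$ is compact saturated --- together with the elementary identity $\bigcap(\mathcal F \vee \mathcal G) = (\bigcap \mathcal F) \cap (\bigcap \mathcal G)$, which holds because $X$ itself lies in every filter (take the other factor to be $X$).

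For $(2) \Rightarrow (1)$, given Scott-open filters $\mathcal F$, $\mathcal G$, write $\mathcal F = \blacksquare Q_1$ and $\mathcal G = \blacksquare Q_2$ with $Q_i$ compact saturated via Hofmann--Mislove. I would show $\mathcal F \vee \mathcal G = \blacksquare(Q_1 \cap Q_2)$: the inclusion $\subseteq$ is immediate, and for $\supseteq$, given an open $W \supseteq Q_1 \cap Q_2$, Fact~\ref{fact:wH} supplies opens $U \supseteq Q_1$, $V \supseteq Q_2$ with $U \cap V \subseteq W$, whence $W = (U \cup W) \cap (V \cup W) \in \mathcal F \vee \mathcal G$. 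Coherence makes $Q_1 \cap Q_2$ compact saturated, so $\blacksquare(Q_1 \cap Q_2)$, hence $\mathcal F \vee \mathcal G$, is Scott-open; thus $\Open X$ is locally temperate.

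For $(1) \Rightarrow (2)$, both halves follow the same pattern. For coherence: given compact saturated $Q_1$, $Q_2$, the filters $\blacksquare Q_1$, $\blacksquare Q_2$ are Scott-open, local temperateness makes $\blacksquare Q_1 \vee \blacksquare Q_2$ Scott-open, and Hofmann--Mislove then identifies $\bigcap(\blacksquare Q_1 \vee \blacksquare Q_2) = Q_1 \cap Q_2$ as compact saturated. For weak Hausdorffness: the neighborhood filter $\mathcal N_x$ of a point $x$ is not merely a filter but Scott-open --- indeed completely prime --- because $x \in \bigcup_i U_i$ forces $x \in U_i$ for some $i$; moreover $\bigcap \mathcal N_x = \upc x$. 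So for $x, y \in X$, local temperateness makes $\mathcal N_x \vee \mathcal N_y$ Scott-open, and Hofmann--Mislove gives $\mathcal N_x \vee \mathcal N_y = \blacksquare(\bigcap(\mathcal N_x \vee \mathcal N_y)) = \blacksquare(\upc x \cap \upc y)$. Reading membership off this equality: any open neighborhood $W$ of $\upc x \cap \upc y$ lies in $\mathcal N_x \vee \mathcal N_y$, i.e.\ $W = U \cap V$ for some open $U \ni x$ and $V \ni y$, so $W$ contains such an intersection --- which is Definition~\ref{defn:wH}.

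Combining $(1) \Leftrightarrow (2)$ with $(2) \Leftrightarrow (3)$ from Theorem~\ref{thm:lss=wh} closes the loop. The step that needs care is the weak Hausdorffness direction: one must notice that $\mathcal N_x$ and $\mathcal N_y$ are Scott-open and that their join intersects down to $\upc x \cap \upc y$, so that the sharp form of Hofmann--Mislove hands back the factorization $W = U \cap V$ directly. There is no circularity, since we are not invoking weak Hausdorffness to obtain that factorization but rather the equality $\mathcal N_x \vee \mathcal N_y = \blacksquare(\upc x \cap \upc y)$.
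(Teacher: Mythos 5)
Your proof is correct and follows essentially the same route as the paper's: Hofmann--Mislove plus Fact~\ref{fact:wH} for $(1)\Leftrightarrow(2)$, and Theorem~\ref{thm:lss=wh} (with sobriety) for $(2)\Leftrightarrow(3)$. The only, harmless, difference is in the weak-Hausdorffness half of $(1)\Rightarrow(2)$, where you specialize to the neighborhood filters $\mathcal N_x = \blacksquare{\upc x}$ and read off Definition~\ref{defn:wH} directly, whereas the paper argues for arbitrary pairs of compact saturated sets and then appeals to Fact~\ref{fact:wH}.
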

\begin{proof}
  $(2) \limp (1)$.  Let $\mathcal F$
  and $\mathcal G$ be two Scott-open filters on $\Open X$.  By the
  Hofmann-Mislove theorem, $\mathcal F = \blacksquare Q_1$ and
  $\mathcal G = \blacksquare Q_2$, for some compact saturated sets
  $Q_1$ and $Q_2$.  We claim that
  $\mathcal F \vee \mathcal G = \blacksquare {(Q_1 \cap Q_2)}$.  Every
  element of $\mathcal F \vee \mathcal G$ is of the form $U \cap V$
  where $U \in \blacksquare Q_1$ and $V \in \blacksquare Q_2$, hence
  belongs to $\blacksquare {(Q_1 \cap Q_2)}$.  Conversely, let
  $W \in \blacksquare {(Q_1 \cap Q_2)}$.  By Fact~\ref{fact:wH}, there
  is an open neighborhood $U$ of $Q_1$ and and open neighborhood $V$
  of $Q_2$ such that $U \cap V \subseteq W$.  In other words,
  $U \cap V$, and therefore also the larger set $W$, in in
  $\mathcal F \vee \mathcal G$.
  Finally, since $X$ is coherent, $Q_1 \cap Q_2$ is compact, so
  $\mathcal F \vee \mathcal G = \blacksquare {(Q_1 \cap Q_2)}$ is
  Scott-open.

  $(1) \limp (2)$.  Since $\Open X$ is locally temperate, for all compact
  saturated subsets $Q_1$ and $Q_2$ of $X$,
  $\blacksquare Q_1 \vee \blacksquare Q_2$ is Scott-open, hence is
  equal to $\blacksquare Q$ for some compact saturated set $Q$.  Then
  $Q$ is the intersection of its open neighborhoods
  $W \in \blacksquare Q$, hence is the intersection of the sets
  $U \cap V$ with $U \in \blacksquare Q_1$ and
  $V \in \blacksquare Q_2$, which is equal to $Q_1 \cap Q_2$.  It
  follows that $Q_1 \cap Q_2$ is compact.  Therefore $X$ is coherent.
  Using the same construction, we obtain that every open neighborhood
  $W$ of $Q_1 \cap Q_2$ can be written as $U \cap V$ with
  $U \in \blacksquare Q_1$ and $V \in \blacksquare Q_2$.  By
  Fact~\ref{fact:wH}, $X$ is weakly Hausdorff.

  $(2) \Leftrightarrow (3)$.  This is by Theorem~\ref{thm:lss=wh}.
\end{proof}

\begin{theorem}
  \label{thm:stable}
  The $\Open \dashv \pt$ adjunction cuts down to an adjoint
  equivalence between the full subcategories of (locally) strongly
  sober spaces and of (locally) temperate spatial frames.
\end{theorem}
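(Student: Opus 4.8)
The plan is to piggyback on the already-established equivalence between the full subcategory of sober spaces and the full subcategory of spatial frames, and to invoke the general principle that an adjoint equivalence restricts to an adjoint equivalence between any two full subcategories that are closed under the two functors. So the only substantive thing to check is the object-level matching: that $\Open$ carries (locally) strongly sober spaces to (locally) temperate spatial frames, and that $\pt$ carries (locally) temperate spatial frames back to (locally) strongly sober spaces. Both directions are essentially restatements of Lemma~\ref{lemma:wHc:stable}.

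First I would treat $\Open$. If $X$ is locally strongly sober, then $X$ is sober by \cite[Lemma~VI-6.13]{GHKLMS:contlatt}, so $\Open X$ is spatial, and by Lemma~\ref{lemma:wHc:stable}, implication $(3)\limp(1)$, $\Open X$ is locally temperate. For the parenthetical ``strongly sober'' reading, I would additionally use that a space is strongly sober exactly when it is locally strongly sober and compact; compactness of $X$ makes $\{\top\}$ Scott-open in $\Open X$ (as recorded just after Definition~\ref{defn:stable}), so $\Open X$ is temperate.

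Next I would treat $\pt$. Given a locally temperate spatial frame $L$, spatiality yields an isomorphism $L \cong \Open(\pt L)$, along which local temperateness transports, so $\Open(\pt L)$ is locally temperate; since $\pt L$ is sober, Lemma~\ref{lemma:wHc:stable}, implication $(1)\limp(3)$, gives that $\pt L$ is locally strongly sober. If $L$ is temperate, then $\{\top\}$ is Scott-open in $L\cong\Open(\pt L)$, which means $\pt L$ is compact, hence strongly sober. Together with the previous paragraph, and with the fact that every (locally) temperate spatial frame $L$ is isomorphic to $\Open(\pt L)$ with $\pt L$ (locally) strongly sober, this shows that the two full subcategories correspond under $\Open$ and $\pt$; the restrictions of the unit $\eta_X\colon X\to\pt(\Open X)$ (a homeomorphism for sober, hence locally strongly sober, $X$) and of the counit (an isomorphism for spatial, hence locally temperate spatial, frames) then provide the adjoint equivalence.

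I do not expect a genuine obstacle: the heart of the matter is Lemma~\ref{lemma:wHc:stable}, which is already proved, and the rest is the standard bookkeeping of restricting an adjoint equivalence to full subcategories stable under the two functors. The one point that is not pure category theory, and hence the only thing to be a little careful about, is the translation of the two compactness side-conditions across the duality --- ``$X$ compact'' versus ``$\{\top\}$ Scott-open in $\Open X$'' --- which is exactly the observation already noted after Definition~\ref{defn:stable}.
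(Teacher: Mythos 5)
Your proposal is correct and follows essentially the same route the paper intends: the paper gives no separate proof of Theorem~\ref{thm:stable}, treating it as the immediate consequence of Lemma~\ref{lemma:wHc:stable} together with the standard restriction of the $\Open\dashv\pt$ adjoint equivalence to full subcategories preserved by both functors, with the compact/strongly sober case handled exactly by the observation after Definition~\ref{defn:stable} that $\{\top\}$ is Scott-open in $\Open X$ if and only if $X$ is compact.
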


Not all locally temperate frames are spatial.  In order to see this,
we first prove the following lemma.
% A \emph{coframe} is a frame in
% the opposite ordering.  Every complete Boolean algebra is both a frame
% and a coframe.  In every coframe, there is a coresiduation operation:
% $u \diff v$ is the smallest element $w$ such that $u \leq v \vee w$.
% Note that $u \leq v \vee (u \diff v)$, and that $u \diff v \leq u$, so
% $v \vee (u \diff v) = v \vee u$.
\begin{lemma}
  \label{lemma:coframe}
  % Every frame that is at the same time a coframe is weakly temperate.
  % In particular, e
  Every complete Boolean algebra is weakly temperate.
\end{lemma}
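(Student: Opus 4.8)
The plan is to determine the Scott-open filters on a complete Boolean algebra $B$. The first, and main, step is to show that every Scott-open filter $\mathcal F$ on $B$ is principal, generated by a compact element. Put $w_0 = \bigwedge\mathcal F$. Passing to the interval $[w_0,\top]$ — itself a complete Boolean algebra, on which $\mathcal F$ is still a Scott-open filter, now with infimum $\bot$ — reduces us to the case $w_0 = \bot$, where the claim is $\mathcal F = B$. If this failed, then $\bot\notin\mathcal F$, and $\mathcal I = \{\neg v\mid v\in\mathcal F\}$ would be an ideal with $\bigvee\mathcal I = \neg\bigwedge\mathcal F = \top$ disjoint from $\mathcal F$ (if $v$ and $\neg v$ were both in $\mathcal F$, then $\bot = v\wedge\neg v\in\mathcal F$). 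Fixing any $u\in\mathcal F$, the family $\{u\wedge x\mid x\in\mathcal I\}$ is then directed with supremum $u\wedge\bigvee\mathcal I = u\in\mathcal F$, yet it meets $\mathcal F$ nowhere (if $u\wedge x\in\mathcal F$ then $x\in\mathcal F\cap\mathcal I$), contradicting Scott-openness. So $\mathcal F = \upc w_0$, and $\upc w_0$ being Scott-open forces $w_0\ll w_0$, i.e.\ $w_0$ is compact.

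The remaining step is to check that the compact elements of $B$ are closed under binary meet: then for Scott-open filters $\mathcal F = \upc w_1$, $\mathcal G = \upc w_2$ one has $\mathcal F\vee\mathcal G = \upc(w_1\wedge w_2)$ with $w_1\wedge w_2$ compact, so $\mathcal F\vee\mathcal G$ is Scott-open, which proves the lemma. For closure under meet, let $w_1,w_2$ be compact and $D$ directed with $w_1\wedge w_2\leq\bigvee D$. The key move is complementation: $w_1\leq\bigvee D\vee\neg w_2 = \bigvee_{d\in D}(d\vee\neg w_2)$ (by frame distributivity and $w_2\vee\neg w_2 = \top$), and $\{d\vee\neg w_2\mid d\in D\}$ is directed, so compactness of $w_1$ yields $d_0\in D$ with $w_1\leq d_0\vee\neg w_2$; hence $w_1\wedge w_2\leq(d_0\vee\neg w_2)\wedge w_2 = d_0\wedge w_2\leq d_0$, using $w_2\wedge\neg w_2 = \bot$. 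The same trick, run just once, even proves $\mathcal F\vee\mathcal G$ Scott-open directly, bypassing principality: given directed $D$ with $\bigvee D\geq u\wedge v$ ($u\in\mathcal F$, $v\in\mathcal G$), one gets $u\leq\bigvee_{d\in D}(d\vee\neg v)$, so $d_0\vee\neg v\in\mathcal F$ for some $d_0\in D$ by Scott-openness of $\mathcal F$, and then $d_0\geq(d_0\vee\neg v)\wedge v\in\mathcal F\vee\mathcal G$.

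The only part that calls for any ingenuity is the first paragraph — realizing that restricting to $[\bigwedge\mathcal F,\top]$ normalizes the infimum to $\bot$, and then the De Morgan manipulation converting non-principality into a directed family that defeats Scott-openness; everything else is routine frame algebra plus the two Boolean identities $v\vee\neg v = \top$ and $v\wedge\neg v = \bot$. This also makes the intended application transparent: in an atomless, non-trivial complete Boolean algebra — e.g.\ the algebra of regular open subsets of $\real$ — every nonzero element is a directed supremum of strictly smaller ones, so $\bot$ is the only compact element and hence $B$ itself is the only Scott-open filter; in particular $B$ has no completely prime filter, so although it is locally temperate by the lemma it is not spatial.
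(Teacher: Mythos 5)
Your proof is correct, but your main route is genuinely different from the paper's. The paper verifies Scott-openness of $\mathcal F \vee \mathcal G$ directly: writing the supremum of a directed family ${(u_i)}_{i\in I}$ as $v \wedge w$ with $v \in \mathcal F$ and $w \in \mathcal G$, it applies Scott-openness to the two directed families ${(u_i \vee (v \wedge \neg w))}_{i\in I}$ and ${(u_i \vee (w \wedge \neg v))}_{i\in I}$, whose suprema are $v$ and $w$, takes a common index by directedness, and meets the two resulting elements to recover $u_i \in \mathcal F \vee \mathcal G$. Your closing aside --- the ``same trick, run just once'', using Scott-openness of $\mathcal F$ alone together with upward-closedness of the filter $\mathcal F \vee \mathcal G$ --- is essentially this computation, slightly streamlined. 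Your primary route is instead structural: every Scott-open filter on a complete Boolean algebra is principal, generated by a compact element (the reduction to the interval $[\bigwedge \mathcal F, \top]$ is legitimate, since directed suprema computed there agree with those in the ambient algebra, so Scott-openness transfers, and the De Morgan/ideal argument is correct), and compact elements are closed under binary meet by the complementation trick, so that $\mathcal F \vee \mathcal G = \upc (w_1 \wedge w_2)$ is Scott-open. This costs more machinery than the paper's half-page computation, but it buys strictly more information: a complete description of the Scott-open filters of a complete Boolean algebra, which in particular makes the non-spatiality example of Remark~\ref{rem:temp:notspatial} (the regular-open algebra of $\real$ has no nonzero compact element, hence no completely prime filter) transparent, as you note. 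Both arguments use nothing beyond finite distributivity and the identities $v \vee \neg v = \top$, $v \wedge \neg v = \bot$, and both are choice-free, in keeping with the paper's stated preference.
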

\begin{proof}
  % Let $L$ be a frame and a coframe, and $\mathcal F$ and $\mathcal G$
  % be two Scott-open filters in $L$.  Let ${(u_i)}_{i \in I}$ be a
  % directed family whose supremum is in $\mathcal F \vee \mathcal G$.
  % We can write this supremum as $v \wedge w$ where $v \in \mathcal F$
  % and $w \in \mathcal G$.  The family
  % ${(u_i \vee (v \diff w))}_{i \in I}$ is directed, and its supremum
  % is
  % $(v \wedge w) \vee (v \diff w) = (v \vee (v \diff w)) \wedge (w \vee
  % (v \diff w)) = v \wedge v = v$.  Since $v \in \mathcal F$ and
  % $\mathcal F$ is Scott-open, $u_i \vee (v \diff w)$ is in
  % $\mathcal F$ for some $i \in I$.  Similarly, $u_j \vee (w \diff v)$
  % is in $\mathcal G$ for some $j \in I$.  By directedness, we may
  % assume that $i=j$.  Then
  % $(u_i \vee (v \diff w)) \wedge (u_i \vee (w \diff v))$ is in
  % $\mathcal F \vee \mathcal G$.  But $(u_i \vee (v \diff w)) \wedge
  % (u_i \vee (w \diff v))
  % = u_i \vee ((v \diff w) \wedge (w \diff v))$, and $(v \diff w)
  % \wedge (w \diff v) \leq (\top \diff w) \wedge w$.
  Let $L$ be a complete Boolean algebra, and $\mathcal F$ and
  $\mathcal G$ be two Scott-open filters in $L$.  Let
  ${(u_i)}_{i \in I}$ be a directed family whose supremum is in
  $\mathcal F \vee \mathcal G$.  We can write this supremum as
  $v \wedge w$ where $v \in \mathcal F$ and $w \in \mathcal G$.  The
  family ${(u_i \vee (v \wedge \neg w))}_{i \in I}$ is directed, and
  its supremum is $(v \wedge w) \vee (v \wedge \neg w) = v$.  Since
  $v \in \mathcal F$ and $\mathcal F$ is Scott-open,
  $u_i \vee (v \wedge \neg w)$ is in $\mathcal F$ for some $i \in I$.
  Similarly, $u_j \vee (w \wedge \neg v)$ is in $\mathcal G$ for some
  $j \in I$.  By directedness, we may assume that $i=j$.  Then
  $(u_i \vee (v \wedge \neg w)) \wedge (u_i \vee (w \wedge \neg v))$
  is in $\mathcal F \vee \mathcal G$.  But
  $(u_i \vee (v \wedge \neg w)) \wedge (u_i \vee (w \wedge \neg v)) =
  u_i \vee ((v \wedge \neg w) \wedge (w \wedge \neg v)) = u_i$, so
  $u_i \in \mathcal F \vee \mathcal G$.
\end{proof}

\begin{remark}
  \label{rem:temp:notspatial}
  Not all locally temperate frames are spatial.  For a counterexample,
  consider the complete Boolean algebra of regular open subsets of
  $\real$.  This is weakly temperate by Lemma~\ref{lemma:coframe}, but
  does not have any point, hence is not spatial
  \cite[Exercise~8.1.25]{JGL-topology}.
  % (An open subset
  % $U$ is \emph{regular} if and only if $U = int (cl (U))$, if and only
  % if $U = \neg\neg U$, where $\neg$ is Heyting pseudocomplement,
  % namely $\neg U \eqdef int (X \diff U)$.)  It is well-known that $L$
  % is a frame; it is Isbell's smallest dense sublocale of $\Open X$
  % \cite[Section~III.8.3]{framesandlocales}.  In our case, every
  % non-empty open subset $U$ of $X$ contains $\top$, so
  % $cl (U) = \dc U$ is equal to the whole of $X$.
\end{remark}

\begin{remark}
  \label{rem:stable:cont}
  Let us write $\uuarrow u$ for $\{v \in L \mid u \ll v\}$.  A frame
  is \emph{stable} if and only if $u \ll v, w$ implies
  $u \ll v \wedge w$ \cite[Definition~2.12]{BH:scomp:frames}.  A sober
  space $X$ is stably locally compact if and only if $\Open X$ is
  continuous and stable.  A compact, continuous, stable frame is
  called a \emph{stably continuous} frame by Banaschewski and
  Br{\"u}mmer \cite{BB:scont:frame}, or a \emph{stably compact} frame
  by Johnstone \cite{Johnstone:stone}.  We prefer the former name,
  since the latter seems to imply that the frame would be stably
  compact in its Scott topology, which would be wrong.  Using
  Theorem~\ref{thm:stable}, one can show the following localic
  analogue of the characterization of stably (locally) compact spaces
  given in Remark~\ref{rem:sloccomp}.  We give an elementary proof,
  which, as for several other results in locale theory, does not
  require the axiom of choice.
\end{remark}

\begin{proposition}
  \label{prop:scont:frame}
  A continuous frame $L$ is stable if and only if it is locally
  temperate.  The stably continuous frames are exactly the continuous,
  temperate frames.
\end{proposition}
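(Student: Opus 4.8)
The plan is to prove the first assertion directly, since routing through Theorem~\ref{thm:stable} would force us to invoke the spatiality of continuous frames, hence the axiom of choice. Fix a continuous frame $L$. First I would record a convenient reformulation of stability: $L$ is stable if and only if $a \ll b$ and $c \ll d$ imply $a \wedge c \ll b \wedge d$. Indeed, if $L$ is stable and $a \ll b$, $c \ll d$, then $a \wedge c \ll b$ and $a \wedge c \ll d$ (since $a \wedge c \leq a \ll b$ and $a \wedge c \leq c \ll d$), hence $a \wedge c \ll b \wedge d$; conversely, take $a = c = u$.

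For the implication ``stable $\limp$ locally temperate'', let $\mathcal F$, $\mathcal G$ be Scott-open filters on $L$. Their join $\mathcal F \vee \mathcal G$ is already known to be a filter, so it suffices to check Scott-openness. Take a directed family ${(u_i)}_{i \in I}$ with $\bigvee_i u_i \in \mathcal F \vee \mathcal G$, say $\bigvee_i u_i = v \wedge w$ with $v \in \mathcal F$ and $w \in \mathcal G$. By continuity $v = \dsup \{v' \mid v' \ll v\}$, so Scott-openness of $\mathcal F$ gives some $v' \ll v$ with $v' \in \mathcal F$, and likewise some $w' \ll w$ with $w' \in \mathcal G$. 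By the reformulation of stability, $v' \wedge w' \ll v \wedge w = \bigvee_i u_i$, so $v' \wedge w' \leq u_i$ for some $i$; since $v' \wedge w' \in \mathcal F \vee \mathcal G$ and that set is upwards-closed, $u_i \in \mathcal F \vee \mathcal G$.

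For ``locally temperate $\limp$ stable'', I would use the standard characterization: in a continuous lattice, $u \ll v$ if and only if there is a Scott-open filter $\mathcal F$ with $v \in \mathcal F \subseteq \upc u$. The ``if'' direction is easy: a directed family whose supremum is $\geq v$ has that supremum in $\mathcal F$, hence has a member in $\mathcal F$, hence a member $\geq u$. For ``only if'', iterate the interpolation property of $\ll$ to build $v = c_0 \gg c_1 \gg c_2 \gg \cdots$ with $u \ll c_n$ for all $n$; each $\uuarrow c_n$ is Scott-open (again by interpolation), and one checks that $\mathcal F \eqdef \bigcup_n \uuarrow c_n$ is an upwards-closed, filtered, Scott-open set with $v = c_0 \in \uuarrow c_1 \subseteq \mathcal F$ and $\mathcal F \subseteq \upc u$. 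Granting this, suppose $u \ll v$ and $u \ll w$, and pick Scott-open filters $\mathcal F$, $\mathcal G$ with $v \in \mathcal F \subseteq \upc u$ and $w \in \mathcal G \subseteq \upc u$. By local temperateness $\mathcal F \vee \mathcal G$ is a Scott-open filter; it contains $v \wedge w$, and every one of its elements $x \wedge y$ (with $x \in \mathcal F$, $y \in \mathcal G$) is $\geq u \wedge u = u$, so $\mathcal F \vee \mathcal G \subseteq \upc u$. By the ``if'' direction, $u \ll v \wedge w$, i.e.\ $L$ is stable.

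The second assertion then follows: a stably continuous frame is by definition a compact, continuous, stable frame, and $L$ is compact exactly when $\{\top\}$ is Scott-open; so ``continuous and temperate'' --- that is, continuous, locally temperate, with $\{\top\}$ Scott-open --- coincides, via the first assertion, with ``continuous, stable and compact'', i.e.\ stably continuous. The main obstacle is the ``only if'' half of the continuous-lattice characterization used in the second implication: it is standard, but the interpolation chain must be produced (or the result quoted) with enough care that the whole argument remains choice-free.
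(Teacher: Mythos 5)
Your proof is correct and essentially the paper's: the forward direction replaces $v,w$ by way-below approximants lying in $\mathcal F,\mathcal G$ and applies stability (in your equivalent two-variable form), and the converse uses precisely the characterization of $u \ll v$ by a Scott-open filter containing $v$ and contained in $\upc u$, which the paper simply quotes (Proposition~I-3.3 of \cite{GHKLMS:contlatt}) rather than re-deriving it via an interpolation chain as you do, together with the same compactness-equals-$\{\top\}$-Scott-open observation for the second assertion. The only point of divergence is that quoted step, where your explicit chain construction does rely on dependent choice --- the caveat you flag yourself at the end.
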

\begin{proof}
  Let $L$ be a continuous frame.  We only need to show that it is
  stable if and only if it is locally temperate.  Since $L$ is
  continuous, every Scott-open subset $U$ is the union of sets
  $\uuarrow u$, $u \in U$, and those sets are themselves Scott-open
  \cite[Proposition~II-1.10]{GHKLMS:contlatt}.  Given any two
  Scott-open filters $\mathcal F$ and $\mathcal G$ of $L$,
  $\mathcal F \vee \mathcal G = \bigcup_{u \in \mathcal F} \uuarrow u
  \vee \bigcup_{v \in \mathcal G} \uuarrow v = \{u' \wedge v' \mid
  \exists u \in \mathcal F, v \in \mathcal G, u \ll u', v \ll v'\}$.
  For every directed family ${(w_i)}_{i \in I}$ whose supremum is in
  $\mathcal F \vee \mathcal G$, namely such that
  $\bigvee_{i \in I} w_i = u' \wedge v'$ for some $u \ll u'$ in
  $\mathcal F$ and some $v \ll v'$ in $\mathcal G$, we have
  $u \wedge v \ll u', v'$.  If $L$ is stable, then
  $u \wedge v \ll u' \wedge v'$, so $u \wedge v \leq w_i$ for some
  $i \in I$, and therefore $\mathcal F \vee \mathcal G$ is Scott-open.
  
  Conversely, if $L$ is locally temperate, we show that $L$ is stable
  as follows.  We use the following trick: since $L$ is continuous,
  for any two elements $u, v \in L$ such that $u \ll v$, $v$ belongs
  to a Scott-open filter $\mathcal F$ included in $\uuarrow u$, hence
  in $\upc u$ \cite[Proposition~I-3.3]{GHKLMS:contlatt}.  Given
  another element $w$ such that $u \ll w$, similarly, we find a
  Scott-open filter $\mathcal G$ containing $w$ and included in
  $\upc u$.  Since $L$ is locally temperate,
  $\mathcal F \vee \mathcal G$ is Scott-open.  It is included in
  $\upc u$, and contains $v \wedge w$.  It follows easily that
  $u \ll v \wedge w$.
\end{proof}

\section{\bf An application: on lenses and quasi-lenses}
\label{sec:lens-quasi-lenses}

One of the standard powerdomains considered in domain theory is the
so-called \emph{Plotkin powerdomain}
\cite[Definition~IV-8.11]{GHKLMS:contlatt}.  In specific situations,
this coincides with various other, more concrete constructions.  Let
us cite three, parameterized by a base space $X$.
\begin{itemize}
\item The space $\Plotkinn X$ of all lenses, with the Vietoris
  topology.  A \emph{lens} is a non-empty set of the form $Q \cap C$
  where $Q$ is compact saturated and $C$ is closed in $X$.  The
  \emph{Vietoris topology} is the familiar one on hyperspaces, and has
  subbasic open subsets of the form $\Box U$ (the set of lenses
  included in $U$) and $\Diamond U$ (the set of lenses that intersect
  $U$), for each open subset $U$ of $X$.
\item The space $\Aval X$ of $\A$-valuations \cite{Heckmann:absval}.
  We omit the definition.
  % An $\A$-valuation on $X$ is a Scott-continuous map $\alpha$ from the
  % lattice of open subsets of $X$ to $\A \eqdef \{0, \mathbf M, 1\}$,
  % ordered by $0 < \mathbf M < 1$, satisfying: (1)
  % $\alpha (\emptyset)=0$, (2) $\alpha (X)=1$, and, for all open
  % subsets $U$ and $V$ of $X$, (3) if $\alpha (U)=0$ then
  % $\alpha (U \cup V) = \alpha (V)$, (4) if $\alpha (V)=1$ then
  % $\alpha (U \cap V) = \alpha (U)$.  The subbasic open sets are
  % $\Box^\A U \eqdef \{\alpha \mid \alpha (U)=1\}$ and
  % $\Diamond^\A U \eqdef \{\alpha \mid \alpha (U) \neq 0\}$, for every
  % open subset $U$ of $X$.
\item The space $\QLV X$ of all quasi-lenses, with a topology which we
  will again call the Vietoris topology.  A \emph{quasi-lens} is a
  pair $(Q, C)$ where $Q$ is compact saturated, $C$ is closed, and the
  following three conditions are met:
  \begin{enumerate}
  \item $Q$ intersects $C$;
  \item $Q \subseteq \upc (Q \cap C)$;
  \item for every open neighborhood $U$ of $Q$, $C \subseteq cl (U
    \cap C)$.
  \end{enumerate}
  The subbasic open subsets are $\Box^\quasi U$ (the set of
  quasi-lenses $(Q, C)$ such that $Q \subseteq U$) and
  $\Diamond^\quasi U$ (the set of quasi-lenses $(Q, C)$ such that $C$
  intersects $U$), for all open subsets $U$ of $X$.
\end{itemize}
Quasi-lenses originate from Heckmann's work
\cite[Theorem~9.6]{heckmann:2ndorder}.  If $X$ is sober, then $\QLV X$
is homeomorphic to $\Aval X$ \cite[Fact~5.2]{GL:duality}.
% : $(Q, L)$ is
% mapped to $\alpha$ defined by $\alpha (U) \eqdef 1$ if
% $Q \subseteq U$, $0$ if $C \cap U = \emptyset$, $\mathbf M$ otherwise.
It is known that $\QLV X$ (equivalently, $\Aval X$) is homeomorphic to
$\Plotkinn X$ when $X$ is Hausdorff
\cite[Theorem~5.1]{Heckmann:absval}, or when $X$ is stably compact
\cite[Proposition~5.3]{GL:duality}.  We show that both of these
results stem from a more general statement on weakly Hausdorff
spaces. The proof is also simpler than the one given in the stably
compact case \cite{GL:duality}.

The key to the new proof is the following lemma.
\begin{lemma}
  \label{lemma:C:lens:wH}
  For every compact saturated subset $Q$ of a weakly Hausdorff space
  $X$ and for every closed subset $C$ of $X$, if
  $C \subseteq cl (U \cap C)$ for every open neighborhood $U$ of $Q$,
  then $C \subseteq cl (Q \cap C)$.
\end{lemma}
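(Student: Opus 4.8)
The plan is to argue by contradiction, and the crucial move is to separate $\upc x\cap Q$ from $C$ rather than trying to separate $Q$ itself from anything. So suppose $x\in C$ but $x\notin cl(Q\cap C)$, and fix an open neighbourhood $W$ of $x$ with $W\cap Q\cap C=\emptyset$. First I would note that $\upc x\cap Q$ is disjoint from $C$: if $z\in\upc x\cap Q$ then $z\geq x$, so $z\in W$ because $x\in W$ and open sets are upwards-closed in the specialization preordering; hence $z$ cannot also lie in $C$, as that would put it in $W\cap Q\cap C$. Consequently $X\diff C$ is an open neighbourhood of $\upc x\cap Q$.

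Next I would promote Definition~\ref{defn:wH} to the following ``point versus compact saturated set'' form: for every point $x$, every compact saturated set $Q$, and every open $O\supseteq\upc x\cap Q$, there are an open neighbourhood $U$ of $x$ and an open neighbourhood $V$ of $Q$ with $U\cap V\subseteq O$. This follows from weak Hausdorffness by the usual compactness argument: for each $y\in Q$ we have $\upc x\cap\upc y\subseteq\upc x\cap Q\subseteq O$ (using that $Q$ is saturated, so $\upc y\subseteq Q$), so Definition~\ref{defn:wH} gives open sets $U_y\ni x$, $V_y\ni y$ with $U_y\cap V_y\subseteq O$; then extract a finite subcover $V_{y_1},\dots,V_{y_n}$ of $Q$ and set $U\eqdef\bigcap_i U_{y_i}$ and $V\eqdef\bigcup_i V_{y_i}$, so that $U\cap V=\bigcup_i(U\cap V_{y_i})\subseteq O$.

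Applying this with $O\eqdef X\diff C$ produces open sets $U\ni x$ and $V\supseteq Q$ with $U\cap V\cap C=\emptyset$. Now the hypothesis enters, exactly once: $V$ is an open neighbourhood of $Q$, so $x\in C\subseteq cl(V\cap C)$, and since $U$ is an open neighbourhood of $x$ it must meet $V\cap C$, i.e.\ $U\cap V\cap C\neq\emptyset$. This contradicts the previous sentence, so $x\in cl(Q\cap C)$; as $x\in C$ was arbitrary, $C\subseteq cl(Q\cap C)$.

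The only step requiring any care is the compactness argument yielding the point-versus-compact-saturated form of weak Hausdorffness; everything else is a two-line manipulation, and in particular no sobriety, coherence, or other separation hypothesis on $X$ is needed. (One may also check that the degenerate cases $Q=\emptyset$ or $C=\emptyset$ are vacuous: if $C\neq\emptyset$ then taking $V\eqdef X\diff C$ in the hypothesis already forces $Q\cap C\neq\emptyset$.)
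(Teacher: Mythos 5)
Your proof is correct and follows essentially the same route as the paper's: pick $x \in C \diff cl(Q \cap C)$, observe that $\upc x \cap Q$ misses $C$, use weak Hausdorffness in a point-versus-compact-saturated form to get open sets $U \ni x$ and $V \supseteq Q$ with $U \cap V \cap C = \emptyset$, and contradict the hypothesis applied to the neighborhood of $Q$ so obtained. The only difference is that where you re-derive that separation property by the finite-subcover argument, the paper simply invokes Fact~\ref{fact:wH} (Keimel--Lawson), applied to the two compact saturated sets $Q$ and $\upc x$.
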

\begin{proof}
  Let us imagine that $C$ is not included in $cl (Q \cap C)$, and let
  $y$ be a point that in $C$ and not in $cl (Q \cap C)$.  The
  intersection $Q \cap \upc y \cap C$ is empty, otherwise it would
  contain a point $z \in Q \cap C \subseteq cl (Q \cap C)$ such that
  $y \leq z$, and that would entail that $y$ is in $cl (Q \cap C)$.
  Therefore $Q \cap \upc y \subseteq W$, where $W$ is the complement
  of $C$.  Since $X$ is weakly Hausdorff, and using
  Fact~\ref{fact:wH}, there is an open neighborhood $U$ of $Q$ and an
  open neighborhood $V$ of $\upc y$ such that $U \cap V \subseteq W$.
  Since $Q \subseteq U$, by assumption $C \subseteq cl (U \cap C)$.
  Now $y$ is both in $C$ and in $V$, so $V$ intersects $C$, and
  therefore also the larger set $cl (U \cap C)$.  Since $V$ is open,
  it must also intersect $U \cap C$, so $U \cap V \cap C$ is
  non-empty.  Then $W \cap C$ is non-empty, which is impossible since
  $W$ is the complement of $C$.
\end{proof}

\begin{proposition}
  \label{prop:qlens=lens}
  Let $X$ be a topological space.
  \begin{enumerate}
  \item For every lens $L$, $\iota (L) \eqdef (\upc L, cl (L))$ is a
    quasi-lens.
  \item For every quasi-lens $(Q, C)$,
    $\varrho (Q, C) \eqdef Q \cap C$ is a lens.
  \end{enumerate}
  Additionally, $\varrho \circ \iota$ is the identity map, and
  $\iota \circ \varrho$ is the identity map if the conclusion of
  Lemma~\ref{lemma:C:lens:wH} holds, in particular if $X$ is weakly
  Hausdorff.
\end{proposition}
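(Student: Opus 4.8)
The plan is to verify the four assertions by unwinding the definitions of lens and quasi-lens; the only ingredients beyond bookkeeping are the standard fact that the saturation $\upc K$ of a compact set $K$ is compact saturated, the equivalence ``saturated $=$ upwards-closed'', and Lemma~\ref{lemma:C:lens:wH} for the last step.

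For part~(1), write a given lens $L$ as $Q_0 \cap C_0$ with $Q_0$ compact saturated and $C_0$ closed and $L \neq \emptyset$. Since $L = Q_0 \cap C_0$ is closed in the compact subspace $Q_0$, it is compact, so $\upc L$ is compact saturated. To see that $\iota (L) = (\upc L, cl (L))$ satisfies the three conditions in the definition of a quasi-lens: condition~(1) holds because $\upc L \cap cl (L) \supseteq L \neq \emptyset$; condition~(2) holds because $L \subseteq \upc L \cap cl (L)$ immediately yields $\upc L \subseteq \upc (\upc L \cap cl (L))$; and condition~(3) holds because any open neighborhood $U$ of $\upc L$ contains $L$, so $L \subseteq U \cap cl (L)$ and therefore $cl (L) \subseteq cl (U \cap cl (L))$. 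For part~(2), a quasi-lens $(Q, C)$ has $Q$ compact saturated and $C$ closed, with $Q \cap C \neq \emptyset$ by condition~(1), so $\varrho (Q, C) = Q \cap C$ is a lens by definition.

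For $\varrho \circ \iota = \mathrm{id}$ I have to show $\upc L \cap cl (L) = L$ for every lens $L$; the inclusion $L \subseteq \upc L \cap cl (L)$ is trivial, and for the converse I use a representation $L = Q_0 \cap C_0$ again: a point of $\upc L$ lies above some $\ell \in L \subseteq Q_0$, and $Q_0$ is upwards-closed, so it lies in $Q_0$; moreover $cl (L) \subseteq cl (C_0) = C_0$; hence every point of $\upc L \cap cl (L)$ lies in $Q_0 \cap C_0 = L$. For $\iota \circ \varrho$, given a quasi-lens $(Q, C)$ I compute $\iota (\varrho (Q, C)) = (\upc (Q \cap C), cl (Q \cap C))$ and check that it equals $(Q, C)$. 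For the first coordinate, $\upc (Q \cap C) \subseteq Q$ since $Q$ is saturated and $Q \cap C \subseteq Q$, while $Q \subseteq \upc (Q \cap C)$ is condition~(2). For the second coordinate, $cl (Q \cap C) \subseteq C$ is immediate, and the reverse inclusion $C \subseteq cl (Q \cap C)$ is precisely the conclusion of Lemma~\ref{lemma:C:lens:wH} for the compact saturated set $Q$ and the closed set $C$, whose hypothesis ``$C \subseteq cl (U \cap C)$ for every open neighborhood $U$ of $Q$'' is exactly condition~(3) of the quasi-lens. Thus $cl (Q \cap C) = C$ whenever that conclusion is available, in particular when $X$ is weakly Hausdorff by Lemma~\ref{lemma:C:lens:wH}.

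There is no real obstacle: the whole argument is a string of one-line checks, and the only genuinely nontrivial point, the equality $cl (Q \cap C) = C$ (that the closure of the lens $Q \cap C$ recovers the closed part $C$ of the quasi-lens), has already been isolated as Lemma~\ref{lemma:C:lens:wH}. The mild care needed elsewhere is just in invoking the two standard facts mentioned above (the saturation of a compact set is compact, and saturated sets are upwards-closed), which underlie the two non-formal inclusions ``$\upc L$ compact'' in part~(1) and ``$\upc L \cap cl (L) \subseteq L$'' in the $\varrho \circ \iota$ computation.
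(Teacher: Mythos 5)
Your proposal is correct and follows essentially the same route as the paper: verify the quasi-lens conditions for $\iota(L)$ directly, note that $\varrho(Q,C)$ is trivially a lens, and reduce $\iota\circ\varrho=\mathrm{id}$ to the equality $cl(Q\cap C)=C$, which is exactly the conclusion of Lemma~\ref{lemma:C:lens:wH} applied via quasi-lens condition~(3). The only (harmless) difference is that you prove $\upc L\cap cl(L)=L$ by hand, whereas the paper cites \cite[Definition~IV-8.15]{GHKLMS:contlatt} for it.
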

\begin{proof}
  (1) Let $L \eqdef Q \cap C$ be a lens.  In particular, $L$ is
  compact, so $Q' \eqdef \upc L$ is compact saturated.
  $C' \eqdef cl (L)$ is clearly closed, and $Q' \cap C'$ contains $L$,
  hence is non-empty.  For every $x \in Q'$, there is a $y \in L$ such
  that $y \leq x$.  Then $y$ is both in $Q'$ and in $C'$, so $x$ is in
  $\upc (Q' \cap C')$.  For every open neighborhood $U$ of $Q'$, $U$
  contains $L$, so $U \cap C'$ also contains $L$, hence
  $cl (U \cap C')$ contains $cl (L) = C'$.  Therefore $(Q', C')$ is a
  quasi-lens.

  (2) For every quasi-lens $(Q, C)$, $L \eqdef Q \cap C$ is clearly a
  lens.

  For every lens $L$, $\varrho (\iota (L)) = \upc L \cap cl (L)$ is
  equal to $L$; see for example
  \cite[Definition~IV-8.15]{GHKLMS:contlatt}.

  For the final part of the proposition, let $(Q, C)$ be any
  quasi-lens, $L \eqdef \varrho (Q, C) = Q \cap C$, and $(Q', C')$ be
  $\iota (L)$, namely $Q' \eqdef \upc L$ and $C' \eqdef cl (L)$.
  Since $L \subseteq C$ and $C$ is closed, $C' \subseteq C$.  The
  reverse inclusion is the conclusion of Lemma~\ref{lemma:C:lens:wH},
  so $C'=C$.  Since $L \subseteq Q$ and $Q$ is saturated,
  $Q' \subseteq Q$; but we also have
  $Q \subseteq \upc (Q \cap C) = \upc L = Q'$, by definition of a
  quasi-lens, so $Q'=Q$.  Therefore $\iota \circ \rho$ is the identity
  map.
\end{proof}

% A topological embedding is a homeomorphism onto its image.
% Equivalently, it is an injective, continuous map $f$ that is
% \emph{reflecting} in the sense that every open set in its domain is
% the inverse image of some open subset of its codomain.  This property,
% as well as continuity, only needs to be verified on a subbase of the
% topology.
% When the domain is $T_0$, the reflection property implies
% injectivity.

\begin{theorem}
  \label{thm:qlens=lens:topo}
  For every topological space $X$, the map $\iota$ is a topological
  embedding of $\Plotkinn X$ into $\QLV X$.  For every open subset $U$
  of $X$, $\iota^{-1} (\Box^\quasi U) = \Box U$ and
  $\iota^{-1} (\Diamond^\quasi U) = \Diamond U$.

  If the conclusion of Lemma~\ref{lemma:C:lens:wH} holds, notably if
  $X$ is weakly Hausdorff, then $\iota$ is a homeomorphism of
  $\Plotkinn X$ onto $\QLV X$, with inverse $\varrho$.
\end{theorem}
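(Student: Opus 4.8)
The plan is to reduce everything to Proposition~\ref{prop:qlens=lens}, which already supplies the set-level bijections, and then to compare the two Vietoris topologies through their subbases. Since the sets $\Box^\quasi U$ and $\Diamond^\quasi U$, $U \in \Open X$, form a subbase of $\QLV X$, it suffices to compute the preimages $\iota^{-1}(\Box^\quasi U)$ and $\iota^{-1}(\Diamond^\quasi U)$ and to check that pulling this subbase back along $\iota$ yields exactly the subbase consisting of the sets $\Box U$ and $\Diamond U$, $U \in \Open X$, of $\Plotkinn X$.

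First I would compute those preimages. For a lens $L$ we have $\iota(L) = (\upc L, cl(L))$, so $\iota(L) \in \Box^\quasi U$ iff $\upc L \subseteq U$; as every open set is upwards-closed, this holds iff $L \subseteq U$, i.e.\ iff $L \in \Box U$, whence $\iota^{-1}(\Box^\quasi U) = \Box U$. Likewise $\iota(L) \in \Diamond^\quasi U$ iff $cl(L)$ meets $U$, and since $U$ is open a point of $cl(L)$ lies in $U$ precisely when $L$ itself meets $U$; so $\iota(L) \in \Diamond^\quasi U$ iff $L \in \Diamond U$, whence $\iota^{-1}(\Diamond^\quasi U) = \Diamond U$. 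In particular $\iota$ is continuous.

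Next, $\iota$ is injective because $\varrho \circ \iota = \mathrm{id}$ by Proposition~\ref{prop:qlens=lens}; regard $\iota$ as a bijection of $\Plotkinn X$ onto its image. The traces $\Box^\quasi U \cap \iota(\Plotkinn X)$ and $\Diamond^\quasi U \cap \iota(\Plotkinn X)$ form a subbase of the subspace topology on $\iota(\Plotkinn X)$, and their preimages under this bijection are exactly $\Box U$ and $\Diamond U$ by the previous paragraph, hence form a subbase of the Vietoris topology on $\Plotkinn X$. An injective continuous map whose pullbacks of a subbase form a subbase is a topological embedding, so $\iota$ embeds $\Plotkinn X$ into $\QLV X$.

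Finally, assume the conclusion of Lemma~\ref{lemma:C:lens:wH} holds --- in particular if $X$ is weakly Hausdorff. Then the last clause of Proposition~\ref{prop:qlens=lens} gives $\iota \circ \varrho = \mathrm{id}$ on $\QLV X$, so $\iota$ is onto with inverse $\varrho$; combined with the embedding just proved, $\iota$ is a homeomorphism of $\Plotkinn X$ onto $\QLV X$ with inverse $\varrho$. There is no substantive obstacle: the only points needing care are the two elementary equivalences ``$\upc L \subseteq U \Leftrightarrow L \subseteq U$'' (using that $U$ is upwards-closed) and ``$cl(L) \cap U \neq \emptyset \Leftrightarrow L \cap U \neq \emptyset$'' (using that $U$ is open), plus the standard fact about embeddings invoked above, all the real work having been done in Proposition~\ref{prop:qlens=lens} and Lemma~\ref{lemma:C:lens:wH}.
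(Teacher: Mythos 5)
Your proposal is correct and follows essentially the same route as the paper: compute $\iota^{-1}(\Box^\quasi U)=\Box U$ and $\iota^{-1}(\Diamond^\quasi U)=\Diamond U$, deduce that the injective map $\iota$ is an embedding via the subbase argument, and obtain the homeomorphism from Proposition~\ref{prop:qlens=lens}. The only cosmetic difference is that you derive injectivity from $\varrho\circ\iota=\mathrm{id}$ rather than citing Remark~IV-8.16 of \cite{GHKLMS:contlatt}, which is perfectly fine.
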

\begin{proof}
  For every lens $L$, $\iota (L) \in \Box^\quasi U$ if and only if
  $\upc L \subseteq U$, if and only if $L \in \Box U$.  Therefore
  $\iota^{-1} (\Box^\quasi U) = \Box U$.  Also,
  $\iota (L) \in \Diamond^\quasi U$ if and only if $cl (L)$ intersects
  $U$, if and only if $L$ intersects $U$, if and only if
  $L \in \Diamond U$.  Hence
  $\iota^{-1} (\Diamond^\quasi U) = \Diamond U$.

  The map $\iota$ is injective by Remark~IV-8.16 of
  \cite{GHKLMS:contlatt}, and what we have just shown then implies
  that $\iota$ is a topological embedding.
  % It follows that $\iota$ is continuous and reflecting.  It is
  % injective by Remark~IV-8.16 of \cite{GHKLMS:contlatt}.  Hence
  % $\iota$ is a topological embedding.
  The final claim follows from
  Proposition~\ref{prop:qlens=lens}.
\end{proof}

The specialization ordering of $\Plotkinn X$ is the so-called
\emph{topological Egli-Milner ordering}: $L \TEMleq L'$ if and only if
$\upc L \supseteq \upc L'$ and $cl (L) \subseteq cl (L')$
\cite[Discussion before Fact~4.1]{GL:duality}.  Instead, the
\emph{Egli-Milner ordering} is defined by $L \EMleq L'$ if and only if
$\upc L \supseteq \upc L'$ and $\dc L \subseteq \dc L'$.  When $X$ is
stably compact, those two orderings coincide
\cite[Lemma~4.2]{GL:duality}, and in fact $\dc L = cl (L)$ for every
lens $L$.  We show that this holds, more generally, on all weakly
Hausdorff spaces.  The proof, once again, is elementary.

\begin{theorem}
  \label{thm:dcL:closed}
  For every lens $L$ in a weakly Hausdorff space $X$, $\dc L$ is
  closed, and so $cl (L) = \dc L$.  The orderings $\TEMleq$ and
  $\EMleq$ coincide.
\end{theorem}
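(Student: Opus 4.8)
The plan is to prove the single equality $\dc L = cl (L)$ for every lens $L$; the assertion that $\dc L$ is closed and the coincidence of the two orderings are then immediate consequences. One inclusion costs nothing: $cl (L)$ is closed, hence downwards-closed in the specialization preordering, and it contains $L$, so $\dc L \subseteq cl (L)$. All the work is in the reverse inclusion $cl (L) \subseteq \dc L$.

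I would argue this by contradiction, mimicking the proof of Lemma~\ref{lemma:C:lens:wH}. Write $L = Q \cap C$ with $Q$ compact saturated and $C$ closed, and suppose there is a point $y \in cl (L) \diff \dc L$. Then no point of $L$ lies above $y$, i.e.\ $Q \cap \upc y \cap C = \emptyset$. The set $\upc y$ is compact (every open cover of $\upc y$ has a member containing $y$, hence containing $\upc y$) and saturated, so $Q$ and $\upc y$ are two compact saturated subsets whose intersection misses $C$, that is, is contained in the open set $W \eqdef X \diff C$.

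Now apply Fact~\ref{fact:wH} to $Q$, $\upc y$ and the open neighbourhood $W$ of $Q \cap \upc y$: there are an open neighbourhood $U$ of $Q$ and an open neighbourhood $V$ of $\upc y$ with $U \cap V \subseteq W$. Since $L \subseteq Q \subseteq U$, and $y \in V$ with $y \in cl (L)$, the open set $V$ meets $L$; any $p \in L \cap V \subseteq U \cap V \subseteq W = X \diff C$ contradicts $p \in L \subseteq C$. Hence $cl (L) \subseteq \dc L$, so $\dc L = cl (L)$, and in particular $\dc L$ is closed.

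Finally, because $cl (L) = \dc L$ for every lens $L$, the defining conditions of $L \TEMleq L'$ (namely $\upc L \supseteq \upc L'$ and $cl (L) \subseteq cl (L')$) and of $L \EMleq L'$ (namely $\upc L \supseteq \upc L'$ and $\dc L \subseteq \dc L'$) are verbatim the same, so the two orderings coincide. The only substantive step is the contradiction argument, and the one point to be careful about there is that $\upc y$ is compact saturated, which is exactly what lets Fact~\ref{fact:wH} apply; the rest is routine bookkeeping.
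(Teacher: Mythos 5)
Your proof is correct and is essentially the paper's own argument: both hinge on applying weak Hausdorffness (via Fact~\ref{fact:wH}, with the compact saturated set $\upc y$ and $Q$, and $W$ the complement of $C$) to show that a point outside $\dc L$ has an open neighborhood missing $L$; you merely package this as the inclusion $cl(L) \subseteq \dc L$ by contradiction, whereas the paper first shows $\dc L$ is closed and then deduces that inclusion. The bookkeeping for the equality of $\TEMleq$ and $\EMleq$ is the same in both.
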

\begin{proof}
  Let us write $L$ as $Q \cap C$ where $Q$ is compact saturated and
  $C$ is closed.  We show that $\dc L$ is closed by showing that every
  point $x$ outside $\dc L$ lies in some open set disjoint from
  $\dc L$.  Since $x \not\in \dc L$, $\upc x$ is disjoint from $L$, so
  $\upc x \cap Q \cap C$ is empty.  Let $W$ be the complement of $C$.
  Then $\upc x \cap Q$ is included in $W$.  Since $W$ is weakly
  Hausdorff, there is an open neighborhood $U$ of $x$ and an open
  neighborhood $V$ of $Q$ such that $U \cap V \subseteq W$.  Then
  $U \cap L = U \cap Q \cap C \subseteq U \cap V \cap C \subseteq W
  \cap C = \emptyset$, so $U$ is disjoint from $L$, as desired.

  Finally, $\dc L \subseteq cl (L)$ since every closed set is
  downwards-closed.  We have just shown that $\dc L$ is closed.  It
  contains $L$, so it contains $cl (L)$.
\end{proof}

Weak Hausdorffness is not necessary in those results, as the following
example shows; but something is needed, see
Example~\ref{ex:Ncof:lens}.

\begin{example}
  \label{ex:aQ:lens}
  In any $T_1$ space, the lenses are exactly the non-empty compact
  subsets.  Although $\alpha (\rat)$ is not weakly Hausdorff (see
  Exercise~\ref{ex:aQ}), $\dc L$ is closed for every lens $L$ of
  $\alpha (\rat)$, because $\dc L = L$, and every compact set is
  closed.  Given any quasi-lens $(Q, C)$ in $\alpha (\rat)$,
  $Q \subseteq \upc (Q \cap C)$ means that $Q \subseteq C$.  We claim
  that $Q=C$.  This trivially entails that the conclusion of
  Lemma~\ref{lemma:C:lens:wH} holds, although $\alpha (\rat)$ is not
  weakly Hausdorff.

  In order to show the claim, let us assume $x \in C \diff Q$.  If
  $\infty \not\in Q$, then $Q$ is compact in $\rat$, and we can cover
  it by a finite union $U$ of open balls of radius $\epsilon > 0$,
  where $\epsilon$ is strictly less than the distance of $x$ to the
  closed set $Q$.  Let $K$ be the union of the corresponding closed
  balls.  The condition $C \subseteq cl (U \cap C)$ then entails
  $C \subseteq K \cap C$; this is impossible since $x \in C$ but
  $x \not\in K$.  If $\infty$ is in $Q$, then $x \neq \infty$, and we
  use a similar argument.  We pick $n > |x|$.  Then $Q \cap {[-n, n]}$
  is closed and bounded, hence compact in $\rat$.  We cover it by a
  finite union $U$ of open balls of radius $\epsilon > 0$, where
  $\epsilon$ is strictly less than the distance of $x$ to
  $Q \cap {[-n, n]}$ (if that set is non-empty, otherwise $\epsilon$
  is unconstrained).  We let $K$ be the union of the corresponding
  closed balls.  We note that $Q \subseteq V$ where
  $V \eqdef U \cup (\alpha (\rat) \diff {[-n, n]})$.  The condition
  $C \subseteq cl (V \cap C)$ entails
  $C \subseteq K \cup {]-\infty,-n]} \cup {[n, +\infty[} \cup
  \{\infty\}$; but $x$ is in $C$ and not in the right-hand side.
\end{example}
% Weak Hausdorffness, or perhaps something even weaker,
% is % sufficient in all the results above, but not
% % necessary, as Example~\ref{ex:Nab:lens} below demonstrates.
% % Still, some assumption is
% needed, as the following example %Example~\ref{ex:Ncof:lens}
% shows.

% \begin{example}
%   \label{ex:Nab:lens}
%   Consider any partially ordered set $P$, in its Alexandroff topology.
%   In condition (3) of the definition of quasi-lenses, one can take
%   $U=Q$, and this forces $C \subseteq cl (Q \cap C)$.  Hence the
%   conclusion of Lemma~\ref{lemma:C:lens:wH} holds, and therefore
%   $\iota$ defines a homeomorphism of $\Plotkinn P$ onto $\QLV P$.
%   Additionally, the conclusion of Theorem~\ref{thm:dcL:closed} holds
%   trivially.  This happens even if $P$ is not weakly Hausdorff, and an
%   example of this situation is the space $\nat \cup \{a, b\}$ of
%   Example~\ref{ex:Nab}.
% \end{example}

\begin{example}
  \label{ex:Ncof:lens}
  Let us consider $\nat$ with the cofinite topology (see
  Example~\ref{ex:Ncof}).  As in every $T_1$ space, the lenses $L$ are
  the non-empty compact subsets, and here this means the non-empty
  subsets.  Then $cl (L) = \dc L$ if and only if $L$ is finite or
  $L=\nat$, so $cl (L) \neq \dc L$ for every infinite proper subset
  $L$ of $\nat$.  We have $L \TEMleq L'$ if and only if $L=L'$ is
  finite, or $L'$ is infinite and is included in $L$;
  % L \supseteq L' and L\subseteq cl(L')
  % - si L' fini: L=L'
  % - si L' pas fini: L \supseteq L'
  while $L \EMleq L'$ if and only if $L=L'$.  Therefore the relations
  $\TEMleq$ and $\EMleq$ differ.  The quasi-lenses are exactly the
  pairs $(Q, C)$ where $Q=C$ is finite and non-empty, or where
  $Q \neq \emptyset$ and $C = \nat$.  However, the image of $\iota$
  consists of all pairs $(Q, C)$ such that $Q=C$ is finite and
  non-empty, or $Q$ is infinite and $C=\nat$, so $\iota$ is not
  surjective.  It also follows that $\iota \circ \varrho$ is not the
  identity map.
\end{example}

\section{\bf Open Questions}
\label{sec:open-questions}

In relation to Section~\ref{sec:stone-duals-locally}: (1) There are
non-spatial locally temperate frames, but is there a non-spatial
temperate frame?  (2) Is $\pt L$ locally strong sober for every weakly
temperate frame?  (3) Considering that every Hausdorff space is
locally strongly sober, is every $T_2$ frame locally temperate
(whatever the notion of being $T_2$ for frames you may wish to
consider)?

% % (1) In relation to Proposition~\ref{prop:monconv}: Is every weakly
% % Hausdorff, weakly coherent space coherent?  (2)
% In relation to the Jia-Jung-Li theorem: Is every well-filtered,
% (weakly) coherent dcpo weakly Hausdorff in its Scott topology?

% (1) Is weak Hausdorffness equivalent to the conclusion of
% Lemma~\ref{lemma:C:lens:wH}, or equivalently to the fact that $\iota$
% is a homeomorphism (Theorem~\ref{thm:qlens=lens:topo})?

% !!! resté à la p.516 cd GHKLMS

% !!! is every countably-based space wH?

% note: \dc L=L est une forme d'analogue de "tout compact est
%fermé" dans les Hausdorff (ici, c'est, la cloture par le bas de tout
%compact est fermée)

\section*{\bf Acknowledgments}
\label{sec:acknowledgments}

I thank Xiaodong Jia for suggesting Example~\ref{ex:Nab}, and for
finding a mistake in the original proof of Proposition~\ref{prop:coh}.

\bibliographystyle{plain}
\ifarxiv

\fi

\end{document}